\theoremstyle{plain}
\newtheorem{theorem}{Theorem}[section]
\newtheorem{lemma}[theorem]{Lemma}
\theoremstyle{remark}
\numberwithin{equation}{section}
\def\tht{\theta}
\def\Om{\Omega}
\def\e{\varepsilon}
\def\g{\gamma}
\def\G{\Gamma}
\def\l{\lambda}
\def\p{\partial}
\def\D{\Delta}
\def\E{\mbox{\rm e}}
\def\a{\alpha}
\def\Odr{\mathcal{O}}
\def\H{W_2}
\def\Ho{W_{2,0}}
\def\di{\,d}
\def\iu{\mathrm{i}}
\def\op#1{\mathcal{#1}}
\def\efop{\op{H}^{\mathrm{eff}}}
\DeclareMathOperator{\RE}{Re}
\DeclareMathOperator{\Dom}{\mathfrak{D}}
\newcounter{assumption}
\begin{document}
\allowdisplaybreaks

\title{\textbf{Planar waveguide with ``twisted'' boundary conditions: small width}}
\author{Denis Borisov\,$^a$, Giuseppe Cardone$^b$}
\date{\small
\begin{center}
\begin{quote}
\begin{enumerate}
{\it
\item[$a)$]
Institute of Mathematics of Ufa Scientific Center of RAS, Chernyshevskogo st.~112, 450008, Ufa, Russia Federation
\\
Bashkir State Pedagogical University,
October St.~3a, 450000 Ufa,
Russian Federation; \texttt{borisovdi@yandex.ru}
\item[$b)$]
University of Sannio,
Department of Engineering, Corso Garibaldi,
107, 82100 Benevento, Italy; \texttt{giuseppe.cardone@unisannio.it}
}
\end{enumerate}
\end{quote}
\end{center}
%
%
}
\maketitle

\begin{abstract}
We consider a planar waveguide with ``twisted'' boundary conditions. By twisting we mean a special combination of Dirichlet and Neumann boundary conditions. Assuming that the width of the waveguide goes to zero, we identify the effective (limiting) operator as the width of the waveguide tends to zero, establish the uniform resolvent convergence in various possible operator norms, and give the estimates for the rates of convergence. We show that studying the resolvent convergence can be treated as a certain threshold effect and we present an elegant technique which justifies such point of view.
\end{abstract}
%

\section{Introduction}

In this paper we study a model of a planar waveguide with twisted boundary conditions. The waveguide is modeled by a strip of a small width. In this domain we consider the Laplacian with a special combination of the Dirichlet and Neumann condition, see fig.~\ref{fig1}. The parameter $L$ introduced on fig.~\ref{fig1} is assumed to be either fixed or defined as $L=\e\ell$ for a fixed $\ell$, where $\e$ is the width of the strip. Our main aim is to study the asymptotic behavior of the resolvent of such operator as the width of the waveguide tends to zero.

There is a vast literature devoted to the study of various elliptic operator in thin bounded domains. Not aiming to cite all existing papers and books, we just mention the books of S.A.~Nazarov and G.P.~Panasenko \cite{N1}, \cite{P1}, see also the references in these books and other papers of these authors. In these works the most attention was paid to the case of Neumann problems and the behavior of the spectrum was studied. Similar studies but for Dirichlet problems were made in \cite{BF1}, \cite{BF2}, \cite{BC}, \cite{BM1}, \cite{BM2}, \cite{FK}, \cite{FS},  \cite{Gr1}, \cite{Gr2}, \cite{Gru1}, \cite{N3}, \cite{N2},  \cite{OV}. The uniform resolvent convergence for Dirichlet Laplacian in a thin bounded two-dimensional domain was established in \cite{FS}, while the multidimensional case was treated in \cite{BF2}. We also mention the paper \cite{EP3}, where the Laplace-Beltrami operator was studied on a bounded manifold shrinking to a finite graph.

The case of unbounded thin domains was considered, too. Here we can refer to the papers \cite{D1}, \cite{ED}, \cite{EP2}, \cite{EP1},  \cite{FS2}, \cite{FS3}, \cite{Gr2}, \cite{Gru2}, \cite{K1}. In all cases the geometry of the thin domain was nontrivial in the sense that it could not be treated just by the separation of variables. The first and on of the most popular examples is a curved infinite thin strip or tube. Such model was studied in \cite{ED} with Dirichlet condition in two- and three-dimensional case. Two-dimensional case with Dirichlet and Neumann conditions on the opposite sides of the strip was considered in \cite{K1}. The main results of the mentioned papers are the estimates for the number of bound states and their asymptotic expansions. The uniform resolvent convergence to the effective operator was also established. A more general case, namely, a curved infinite tube with a torsion was studied in \cite{Gru2}. Here the complete asymptotic expansions for the eigenvalues were constructed. The curved thin tube whose cross-section has a hole was treated in \cite{D1}. The quasi-classical approximation was considered and the operator was multiplied by the square of a small parameter characterizing the width of the tube. The asymptotic expansions for the eigenvalues and the eigenfunctions were constructed. One more result was the asymptotic expansion to the solution of the initial evolution problem.

One more example of an infinite thin domain is a thin domain with variable width. It was studied in \cite{FS2}. The uniform resolvent convergence to an effective operator was established as well as the estimates for the rate of convergence. One more result of \cite{FS2} is two-terms asymptotics for the first eigenvalues. Similar results but in the case of periodically curved thin strip were established in \cite{FS3}. Thin domains obtained as appropriate approximations of various graphs were treated in \cite{EP2}, \cite{EP1}. In \cite{EP1}  the  resolvent convergence and the effective operator were studied, while in \cite{EP2} the study was devoted to the asymptotic behavior of the resonances. Various physical aspects of the elliptic operators in thin domains were discussed in \cite{D2}, \cite{D3}. We also mention the review \cite{Gr2}, where one can find further information of the state-of-art in the studies of thin domains.

The models similar to our were studied in \cite{ACF}, \cite{CE}, \cite{CF}, \cite{Ca}, \cite{C}. The first four papers are devoted to the model of a thin bent waveguide. The curvature describing the bending was scaled together with the width in the same fashion as we rescale our waveguide in the case $L=\e\ell$. In \cite{C} one more similar model was considered. Here the waveguide was three-dimensional and the nontrivial geometry came from localized twisting which was scaled together with the width as the bending in \cite{ACF}, \cite{CE}, \cite{CF}, \cite{Ca}. The operator in \cite{ACF}, \cite{CE}, \cite{Ca}, \cite{C} was the Dirichlet Laplacian, while in \cite{CF} it was the Robin Laplacian. The main result of \cite{ACF}, \cite{CE}, \cite{CF}, \cite{Ca}, \cite{C} is the convergence theorems. Namely, the effective operator was found and the uniform resolvent convergence was proven. In \cite{Ca} the estimates for the rate of convergence were   established under some additional restrictions for the resolvent's domain. It was also shown in \cite{ACF}, \cite{CE}, \cite{CF}, \cite{Ca} that the effective operator can involve nontrivial boundary condition instead of bending, if certain one-dimensional operators possesses eigenvalue or resonance at zero.

The main difference of our model in comparison with \cite{ACF}, \cite{CE}, \cite{CF}, \cite{Ca} is that instead of bending we consider a special ``twisted'' combination of Dirichlet and Neumann boundary conditions. It must be said that we have borrowed the idea of such combination from \cite{DK}. In the case $L=\e\ell$ our model can be also considered as a two-dimensional analogue of that in \cite{C}.

Let us describe our main results. In the case $L=\e\ell$ we can rescale the strip to that with a fixed width and the boundary conditions imposed on fixed parts of the boundary. Under such rescaling the original resolvent $(\op{H}^{(\e)}_{\e\ell}-E\e^{-2}-\l)^{-1}$, $E=\mathrm{const}$, $\l=\mathrm{const}$, becomes $\e^2(\op{H}^{(1)}_{\ell}-E-\e^2\l)^{-1}$. Here $\op{H}^{(\e)}_{L}$ denotes the operator we consider. As $E$
we choose the threshold of the essential spectrum and we consider the original question on the resolvent convergence as a certain threshold effect. Our results show that the asymptotic behavior of the original resolvent highly depends on the spectral properties of the threshold $E$ of the rescaled operator. The idea of treating the resolvent convergence for perturbed elliptic operators as a certain threshold effect has been recently developed in the series of papers by M.Sh.~Birman and T.A.~Suslina for the homogenization problems,  see, for instance, \cite{BS1}, \cite{BS2}, \cite{BS3}. Although we study a problem of a completely different nature and we employ an essentially different technique, we show that in our case the effective operator also appears as a result of certain threshold effect. Namely, the form of the effective operator depend on whether the considered threshold is a virtual level or not. If it is not, the effective boundary condition for the effective operator is the  Dirichlet one. If the virtual level is present, the effective boundary condition becomes more complicated. The last fact is in a good accordance with the results of \cite{ACF}, \cite{CE}, \cite{CF},  \cite{Ca} since our virtual levels play the same role as the aforementioned zero eigenvalues or resonances in \cite{ACF}, \cite{CE}, \cite{CF}, \cite{Ca}. The same situation occurs in \cite{C}. Namely, if we rescale  the tube considered in \cite{C} to a fixed one, we obtain the fixed tube with a fixed twisting. It is known that such model has no virtual levels at the threshold of the essential spectrum and the Hardy inequality is valid, see \cite{EKK}. This is why the effective operator in \cite{C} involves the Dirichlet condition.

If $L$ is fixed, we again make the aforementioned rescaling of the strip, and it leads us to the operator $\op{H}^{(1)}_{L\e^{-1}}$, i.e., for this operator the length of the overlap of the Neumann conditions increases unboundedly as $\e\to+0$. And as in the first case, we reduce the question on determining the effective operator to studying the spectral properties of the thresholds of certain fixed operator.

In addition to identifying the effective operator, we prove the uniform resolvent convergence of the perturbed operator to the effective one. Moreover, we   establish the estimates for the rates of convergence. These results are obtained for two possible operator norms in which we can consider the resolvent convergence. Namely, these are the norms of the operators acting in $L_2$ and from $L_2$ into $\H^1$. We also observe that in \cite{ACF}, \cite{CE}, \cite{Ca}, \cite{C} the uniform resolvent convergence was established only in the sense of $L_2$ norm.

\begin{figure}
\begin{center}\includegraphics[scale=0.6]{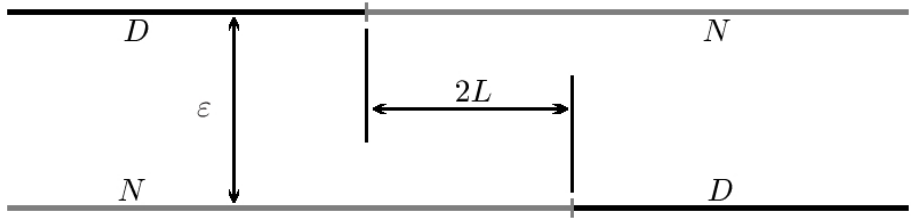}

\caption{Waveguide with combined boundary conditions} \label{fig1}\end{center}
\end{figure}

The approach we use is quite elegant. The core is the technique presented in \cite{pap1} for studying the same model but in the case of the fixed width. It comes originally from the papers \cite{BEG}, \cite{GR2}, where it was used to study the behavior of the discrete eigenvalues emerging from the essential spectrum. In \cite{pap1} it was adapted also for the considered model of the waveguide of a fixed width with twisted boundary conditions. In this paper we apply the adapted technique to study the resolvent convergence and this is for the first time that this approach is used for such study. Its main content is as follows. In the case $L=\e\ell$ it allows us to make an analytic continuation of the resolvent $(\op{H}^{(1)}_{\ell}-E-\e^2\l)^{-1}$ in a vicinity of the threshold of the essential spectrum. And then we give the description of the possible singularities of this continuation at the threshold. Exactly the last description determines the effective boundary condition and the rates of the resolvent convergence. If $L$ is fixed, we again employ the same approach but with the combination of some ideas of \cite{G1}, \cite{G2}. We also mention that one of the effective approaches of studying the asymptotic behavior of the solutions to the problems in the thin domains is the method of matching of asymptotic expansions, see, for instance, \cite{N1}. Nevertheless, this method does not work in our case since it requires a quite high smoothness of the solution to the limiting problem that is not the case for our problem. In conclusion we note that our approach is quite universal and can be employed in studying various similar problems.

\section{Formulation of the problem and the main result}\label{Sec.main}

%
Let $x=(x_1,x_2)$ be the Cartesian coordinates in $\mathds{R}^2$, $\e$ be a small positive parameter, and $\Pi^{(\e)}:=\{x: 0<x_2<\e\}$ be an infinite strip of the width $\e$, where $\e$ is a small positive parameter. Given a number $L>0$, we partition the boundary of $\Pi^{(\e)}$ as follows,
\begin{equation*}
\g^{(\e)}_L:=\{x: x_1>L, x_2=0\}\cup\{x: x_1<-L, x_2=\e\},\quad \G^{(\e)}_L:=\p\Pi^{(\e)}\setminus\overline{\g^{(\e)}_{L}}.
\end{equation*}

In this paper we consider the Laplacian in $\Pi^{(\e)}$ subject to the Dirichlet boundary condition on $\g^{(\e)}_L$ and to the Neumann one on $\G^{(\e)}_L$, cf. fig.~\ref{fig1}. We define this operator as associated with the symmetric lower-semibounded sesquilinear form
$(\nabla u, \nabla v)_{L_2(\Pi^{(\e)})}$ on 
$\Ho^1(\Pi^{(\e)},\g^{(\e)}_L)$,
where the symbol $\Ho^1(\Om,S)$ is the Sobolev space of the functions in $\H^1(\Om)$ vanishing on $S$. We denote the introduced operator as $\op{H}^{(\e)}_L$.

Our main goal is to study the resolvent convergence of $\op{H}^{(\e)}_L$ as $\e\to+0$. We consider two cases. In the first case we let $L=\e \ell$, where $\ell\geqslant 0$ is a fixed number independent of $\e$. In the second case $L>0$  is fixed and independent of $\e$. The structure of the effective (limiting) operator depends strongly on $L$ and to formulate the main results we introduce additional notations.

Consider the operator $\op{H}^{(1)}_\ell$. It was shown in \cite[Th. 2.2]{pap1} that there exists infinitely many critical values  $0<\ell_1<\ell_2<\ldots<\ell_n<\ldots$ such that for $\ell\in(\ell_n,\ell_{n+1}]$ the operator $\op{H}^{(1)}_\ell$ has precisely $n$ isolated eigenvalues. Given a function $f\in L_2(\Pi^{(\e)})$, we introduce two projections,
\begin{align*}
&(\op{P}_\e f)(x_1,\e):=\int\limits_0^\e f(x)\chi_1^{(\e)}(x)\di x_2,\quad (\op{P}_{\e,L} f)(x_1,\e):=\int\limits_0^\e f(x)\chi_{1,L}^{(\e)}(x)\di x_2, 
\\
&
\chi_1^{(\e)}(x):=\left\{
\begin{aligned}
\sqrt{\frac{2}{\e}}& \sin \frac{\pi x_2}{2\e}, && x_1>0,
\\
\sqrt{\frac{2}{\e}}& \cos\frac{\pi x_2}{2\e}, && x_1<0,
\end{aligned}
\right.
\quad\chi_{1,L}^{(\e)}(x):=\left\{
\begin{aligned}
\sqrt{\frac{2}{\e}}& \sin \frac{\pi x_2}{2\e}, && x_1>L,
\\
\frac{1}{\sqrt{\e}}&, && |x_1|< L,
\\
\sqrt{\frac{2}{\e}}& \cos\frac{\pi x_2}{2\e}, && x_1<-L.
\end{aligned}
\right. 
\end{align*}
By $\|\cdot\|_0$ and $\|\cdot\|_1$ we denote the norm of an operator acting respectively in $L_2(\Pi^{(\e)})$ and from $L_2(\Pi^{(\e)})$ in  $\H^1(\Pi^{(\e)})$. The symbol $\Dom$
will be employed to indicate the domain of an operator.

Assume first that $L=\e\ell$ and let us introduce the effective operator. It is the Schr\"odinger operator on the axis
\begin{equation}\label{2.4}
\efop:=-\frac{d^2}{dx_1^2}
\end{equation}
subject to certain boundary condition. The type of this condition depends on $\ell$. If $\ell$ is noncritical, the boundary condition is the Dirichlet one at zero, i.e., in this case the domain of the effective operator is given by the identity
\begin{equation*}
\Dom(\efop)=\{u\in\H^1(\mathds{R}): u(0)=0\}\cap\H^2(\mathds{R}_+)\cap\H^2(\mathds{R}_-).
\end{equation*}
If $\ell=\ell_n$ is critical, we have two subcases. Namely, for odd $n$ there is no boundary conditions at all and $\efop$ is the usual Schr\"odinger operator (\ref{2.4}) having $\H^2(\mathds{R})$ as the domain. For even $n$ the boundary conditions are the most complicated and interesting ones. Namely, in this case the domain consists of the functions in $\H^2(\mathds{R}_+)\cap\H^2(\mathds{R}_-)\cap\H^1(\mathds{R})$ satisfying the boundary conditions
\begin{equation}\label{2.6}
u(+0)=-u(-0),\quad u'(+0)=-u'(-0).
\end{equation}

Now we are in the position to formulate our first main result.

\begin{theorem}\label{th2.1}
Assume $L=\e\ell$ and $\l\in \mathds{C}\setminus \mathds{R}$. Then for sufficiently small $\e$ the resolvent $\left(\op{H}^{(\e)}_{\e\ell}-\frac{\pi^2}{4\e^2}-\l\right)^{-1}$ is well-defined and
\begin{equation*}
\left\| \left(\op{H}^{(\e)}_{\e\ell}-\frac{\pi^2}{4\e^2}-\l\right)^{-1} -\chi_1^{(\e)} \big(\efop-\l\big)^{-1}\op{P}_\e
\right\|_0\leqslant C\e^{1/2}
\end{equation*}
for critical $\ell$, and
\begin{align*}
&\left\| \left(\op{H}^{(\e)}_{\e\ell}-\frac{\pi^2}{4\e^2}-\l\right)^{-1} -\chi_1^{(\e)} \big(\efop-\l\big)^{-1}\op{P}_\e
\right\|_0\leqslant C\e^{3/2}
\\
&
\left\| \left(\op{H}^{(\e)}_{\e\ell}-\frac{\pi^2}{4\e^2}-\l\right)^{-1} -\chi_1^{(\e)} \big(\efop-\l\big)^{-1}\op{P}_\e
\right\|_1\leqslant C\e^{1/2}
\end{align*}
for noncritical $\ell$. Here the constants $C$ are independent of $\e$ but depend on $\l$ and $\ell$.
\end{theorem}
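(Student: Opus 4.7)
The first step is to rescale $y=x/\e$, which defines a unitary $U_\e:L_2(\Pi^{(\e)})\to L_2(\Pi^{(1)})$ conjugating $\op{H}^{(\e)}_{\e\ell}$ to $\e^{-2}\op{H}^{(1)}_\ell$. Consequently
\begin{equation*}
\left(\op{H}^{(\e)}_{\e\ell}-\frac{\pi^2}{4\e^2}-\l\right)^{-1}=U_\e^{*}\bigl(\e^2\tilde{R}(\e^2\l)\bigr)U_\e,\quad \tilde{R}(\mu):=\left(\op{H}^{(1)}_\ell-\frac{\pi^2}{4}-\mu\right)^{-1},
\end{equation*}
and the claimed bounds become assertions about the fixed-geometry resolvent $\tilde R(\mu)$ at small complex $\mu=\e^2\l$ near the threshold $E=\pi^2/4$ of the essential spectrum. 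The effective approximation $\chi_1^{(\e)}(\efop-\l)^{-1}\op{P}_\e$, conjugated by $U_\e$, becomes $\chi_1^{(1)}\cdot\e^2 R^{1D}(\e^2\l)\cdot\op{P}_1$, where $R^{1D}(\mu)$ is the one-dimensional resolvent $(-d^2/dy_1^2-\mu)^{-1}$ with the boundary condition dictated by the value of $\ell$.

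The main analytic step, following the technique of \cite{pap1} (adapted from \cite{BEG}, \cite{GR2}), is an analytic continuation of $\tilde R(\mu)$ through $\mu=0$ in the variable $k=\sqrt{-\mu}$. Writing a solution of $(\op{H}^{(1)}_\ell-E-\mu)u=f$ as $u=\chi_1^{(1)}(y)v(y_1)+u_\perp(y)$ with $u_\perp$ orthogonal in $y_2$ to the transverse ground state, the component $u_\perp$ is controlled by a resolvent that is analytic at $\mu=0$ because the remaining transverse modes lie above the threshold by a positive gap. The profile $v$ satisfies $-v''-\mu v=\op{P}_1 f$ on each half-axis $\pm y_1>0$, and the two half-line pieces are coupled by matching at $y_1=0$ governed by a finite-dimensional system encoding the action of $\op{H}^{(1)}_\ell$ on the overlap region $|y_1|<\ell$. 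By \cite[Th.~2.2]{pap1} this matching system is invertible at $\mu=0$ precisely when $\ell$ is noncritical; for critical $\ell=\ell_n$ its one-dimensional kernel is spanned by the virtual-level profile.

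The principal part of the continuation identifies the effective operator. For noncritical $\ell$, invertibility of the matching forces $v(+0)=v(-0)=0$, so the leading term is $\chi_1^{(1)}(-d^2/dy_1^2-\mu)^{-1}_D\op{P}_1$ with Dirichlet condition, producing $\efop$ on $\Dom(\efop)=\{u\in\H^1(\mathds{R}):u(0)=0\}\cap\H^2(\mathds{R}_+)\cap\H^2(\mathds{R}_-)$. For critical $\ell=\ell_n$, the virtual profile is even in $y_1$ for odd $n$, in which case the Dirichlet condition is absorbed and $\efop$ acts on $\H^2(\mathds{R})$; and it is odd in $y_1$ for even $n$, yielding the antisymmetric matching \eqref{2.6}. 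The rate estimates are then obtained by plugging $\mu=\e^2\l$ into the expansion: each additional power of $k=\e\sqrt{-\l}$ extracted by the parametrix contributes a factor of $\e$ to the $L_2(\Pi^{(1)})$-operator norm of the remainder. Combining this with the prefactor $\e^{2}$ and with the $\e^{-1/2}$ normalization of $\chi_1^{(\e)}$ produces $C\e^{3/2}$ in $\|\cdot\|_0$ when two extra powers are available (the noncritical case) and $C\e^{1/2}$ when only one is (the critical case). The $\|\cdot\|_1$-estimate follows from the $\|\cdot\|_0$-estimate at the cost of one power of $\e$, since in the original variables $\partial_x=\e^{-1}\partial_y$.

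\textbf{Main obstacle.} The most delicate point is the precise description of the singularity of $\tilde R(\mu)$ at $\mu=0$ in the critical cases: one must isolate the virtual-level solution, determine its parity from the index $n$, and show that after subtracting the correct rank-one singular piece the remainder is $\Odr(k)=\Odr(\sqrt{-\mu})$ in the relevant operator norm. A secondary difficulty is the careful bookkeeping of rescaling factors needed to convert the abstract bound on $\tilde R(\mu)-\chi_1^{(1)}R^{1D}(\mu)\op{P}_1$ into the sharp exponents $3/2$ and $1/2$ on the original strip in both the $L_2$ and $\H^1$ norms.
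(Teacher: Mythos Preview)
Your high-level strategy---rescale to the fixed strip, study the resolvent $\tilde R(\mu)$ near the threshold $\pi^2/4$, and read off the effective boundary condition from the presence/absence and parity of a virtual level---matches the paper. However, the concrete decomposition you propose differs from what the paper actually does, and the steps you leave vague (``finite-dimensional matching system'', power counting for the exponents) are exactly where the work lies.

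The paper does \emph{not} project onto $\chi_1^{(1)}$ and a transverse-orthogonal remainder. Instead it introduces an explicit model solution $\widetilde u_1$ solving $(-\Delta-\pi^2/4-\e^2\l)\widetilde u_1=f(\e\cdot)$ with an \emph{artificial Dirichlet cut} at $x_1=0$, built by separation of variables (formula~(\ref{v3.4})); it then writes $\widetilde u_\e=\xi_1\widetilde u_1+\widetilde u_2$ with a cutoff $\xi_1$ localized outside $|x_1|<\ell+1$. The correction $\widetilde u_2$ has compactly supported right-hand side $g$, and the paper invokes \cite[Lemma~4.7]{pap1} to obtain the representation $\widetilde u_2=(\e\sqrt{-\l})^{-1}(\op{T}g)\phi+\widehat u_2$ with $\widehat u_2$ uniformly bounded. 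The heart of the argument is Lemma~\ref{vlm4.4}: a direct computation of the functional $\op{T}g=\int_\Pi g\phi_n\,dx$ by integration by parts, using the six integral identities for $\phi_n$ in Lemma~\ref{lm3.1}, which yields $\op{T}g=\e^{-3/2}\sqrt{-2\l}\,(F_++(-1)^{n-1}F_-)+\Odr(\e^{-1})$ with $F_\pm$ the half-line transforms of $\op{P}_\e f$. This is what produces the correct coupling (Dirichlet, free, or (\ref{2.6})) and the sharp $\e^{-5/2}$ versus $\e^{-3/2}$ bounds on $\widetilde u_\e-\widehat U_\e\chi_1$ in $\H^1(\Pi)$ that rescale to $\e^{1/2}$ and $\e^{3/2}$.

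Your orthogonal decomposition is problematic as stated: $\chi_1^{(1)}$ jumps at $y_1=0$, and on the overlap $|y_1|<\ell$ the transverse eigenbasis is that of the Neumann interval, not the mixed one, so ``$u_\perp$ orthogonal to the ground state'' does not give a clean separation there. One can certainly recast the argument as a transfer/DtN matching at $y_1=\pm\ell$, but you would still need an analogue of Lemma~\ref{lm3.1} to extract the leading coefficient, and your power-counting sketch does not by itself deliver the exponents $3/2$ and $1/2$---those come from the precise $\e^{-3/2}$ size of $\op{T}g$ and of $\widetilde u_1$ on $\Pi_{\ell+2}$ (Lemma~\ref{vlm3.8}).
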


Suppose now that $L$ is fixed and independent of $\e$. Here we study the convergence of the resolvent at the point $E\e^{-2}+\l$,  $\l\in \mathds{C}\setminus \mathds{R}$, where $E=0$ or $E=\pi^2/4$. We choose $E$ in this way since $0$ and $\pi^2/(4\e^2)$ are exactly the eigenvalues associated with the first transversal modes on the cross-sections of the waveguides for $|x_1|<L$ and $|x_1|>L$.

It turns out that in the considered case the approximating operator is of different nature than in Theorem~\ref{th2.1} an it also depends substantially on $E$. Namely, by $\efop$ we denote the operator (\ref{2.4}) subject to the Dirichlet condition at $x_1=\pm L$, i.e., it has the domain
\begin{align*} 
\Dom(\efop)=&\{u\in\H^1(\mathds{R}): u(-L)=0,\ u(L)=0\}
\\
&\cap\H^2(-\infty,-L)\cap\H^2(-L,L)\cap\H^2(L,+\infty).
\end{align*}
By $\op{E}_0$ we denote the characteristic function of the segment $[-L,L]$, while $\op{E}_{\pi^2/4}$ is the characteristic functions of $\mathds{R}\setminus[-L,L]$.

\begin{theorem}\label{th2.2}
Assume $L$ is fixed, $\l\in \mathds{C}\setminus \mathds{R}$, and $E=0$ or $E=\pi^2/4$. Then for sufficiently small $\e$ the resolvent $\big(\op{H}^{(\e)}_{L}-E\e^{-2}-\l\big)^{-1}$  is well-defined and satisfies the inequalities
\begin{align*}
&\left\| \left(\op{H}^{(\e)}_{L}-\frac{E}{\e^2}-\l\right)^{-1} -\chi_1^{(\e)}\big(\efop-\l\big)^{-1}\op{E}_E\op{P}_\e
\right\|_0\leqslant C\e^{3/2},
\\
&
\left\| \left(\op{H}^{(\e)}_{L}-\frac{E}{\e^2}-\l\right)^{-1} -\chi_1^{(\e)}\big(\efop-\l\big)^{-1}\op{E}_E\op{P}_\e
\right\|_1\leqslant C\e^{1/2}.
\end{align*}
Here the constants $C$ are independent of $\e$ but depend on $\l$ and $\ell$.
\end{theorem}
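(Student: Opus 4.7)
The plan is to adapt the analytic-continuation / quasi-resolvent technique underlying Theorem~\ref{th2.1} (originating in~\cite{pap1}, \cite{BEG}, \cite{GR2}) to the fixed-$L$ regime, combining it with the ideas of~\cite{G1}, \cite{G2}, in order to produce an explicit asymptotic inverse for $\op{H}^{(\e)}_L-E/\e^2-\l$. The Dirichlet condition at $x_1=\pm L$ built into $\efop$ reflects the following mechanism: at spectral parameter $E/\e^2$, the first transverse mode of the \emph{complementary} subregion (the constant Neumann mode when $E=\pi^2/4$, the first Dirichlet--Neumann mode when $E=0$) is deeply sub-threshold there and can communicate across $x_1=\pm L$ only via an exponentially decaying boundary layer of longitudinal thickness $\e$. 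This decoupling splits $\efop$ into three independent pieces on $(-\infty,-L)$, $(-L,L)$, and $(L,+\infty)$.

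I would first form the quasi-resolvent $\op{R}_\e:=\chi_1^{(\e)}(\efop-\l)^{-1}\op{E}_E\op{P}_\e$, adjusted by a small interface corrector near $x_1=\pm L$ so as to lie in the form domain $\Ho^1(\Pi^{(\e)},\g^{(\e)}_L)$, and then compute the defect $(\op{H}^{(\e)}_L-E/\e^2-\l)\op{R}_\e-\op{I}$. Inside each subregion the leading transverse-mode contribution cancels exactly, because $-\p_{x_2}^2$ applied to the relevant first transverse mode coincides with $E\e^{-2}$ times that mode in the subregion picked out by $\op{E}_E$. The residual therefore splits into two pieces: a higher-mode part coming from $f-\op{P}_\e f$, and an interface piece localized within distance $O(\e)$ of $x_1=\pm L$.

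The higher-mode residual is controlled by separation of variables: the restriction of $-\Delta-E/\e^2-\l$ to the orthogonal complement of the first transverse mode in each fibre is uniformly elliptic with spectral gap of order $\e^{-2}$, so its inverse gains a factor $\e^2$ in the $L_2$-norm and $\e$ in the $\H^1$-norm, matching the claimed rates. The interface piece is the heart of the argument: near each of $x_1=\pm L$ I would construct an exponentially decaying corrector of the form $w_\pm(x_1,x_2)=\e\,\eta(x_1)\psi_\pm(x_2/\e)\E^{-c|x_1\mp L|/\e}$ absorbing the mismatch of transverse modes across the interface. Since $(\efop-\l)^{-1}\op{E}_E\op{P}_\e f$ vanishes at $x_1=\pm L$ by virtue of the Dirichlet condition in $\Dom(\efop)$, the layer is driven only by its first longitudinal derivative, which is bounded by $C\|f\|_{L_2(\Pi^{(\e)})}$; the resulting corrector has $L_2$-norm of order $\e^{3/2}$ and $\H^1$-norm of order $\e^{1/2}$.

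Assembling these estimates yields $(\op{H}^{(\e)}_L-E/\e^2-\l)\op{R}_\e=\op{I}+\op{K}_\e$ with $\|\op{K}_\e\|_0\leqslant C\e^{3/2}$; the theorem then follows from the identity $(\op{H}^{(\e)}_L-E/\e^2-\l)^{-1}-\op{R}_\e=-(\op{H}^{(\e)}_L-E/\e^2-\l)^{-1}\op{K}_\e$ together with the a~priori bound $\|(\op{H}^{(\e)}_L-E/\e^2-\l)^{-1}\|_0\leqslant|\IM \l|^{-1}$ from self-adjointness (which also guarantees the well-definedness of the resolvent for every $\e$, since shifting by the real number $E/\e^2$ does not move $\l$ onto the real axis). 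The main obstacle I anticipate is the rigorous construction and bookkeeping of the interface corrector at $x_1=\pm L$: the transverse mode structure genuinely changes across the interface (Dirichlet--Neumann on one side, pure Neumann on the other), and one must analyse the transverse operator on the ``wrong'' side, verify that its first spectral projection supplies the correct exponential decay rate on the longitudinal scale $\e$, and check that the corrector respects the mixed boundary conditions along $\p\Pi^{(\e)}$. Once this is under control, the remaining estimates reduce to standard elliptic bounds.
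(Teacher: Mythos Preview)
Your overall strategy --- build a parametrix $\op{R}_\e$, compute its defect, and close via the resolvent identity together with the trivial self-adjoint bound --- is a legitimate route and is in spirit dual to what the paper does. The paper works from the other side: it rescales to the unit-width strip (so the overlap length becomes $L\e^{-1}\to\infty$), writes the \emph{exact} solution as $\widetilde u_1\xi_3+\widetilde u_2$ where $\widetilde u_1$ solves the fully decoupled problem with additional Dirichlet walls at $|x_1|=L\e^{-1}$, and then controls the remainder $\widetilde u_2$ by comparing it (via the distant-perturbation device of~\cite{G1},\cite{G2}) with solutions $\widetilde u_3^\pm$ of a model half-strip problem (Lemmas~\ref{lm3.2}--\ref{lm3.4}). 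Both routes ultimately hinge on the same boundary-layer analysis near $x_1=\pm L$; the paper's version avoids ever having to place the parametrix into $\Dom(\op H^{(\e)}_L)$.

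There is, however, a concrete gap in your interface-corrector step. Your ansatz $w_\pm=\e\,\eta\,\psi_\pm(x_2/\e)\E^{-c|x_1\mp L|/\e}$ presupposes exponential longitudinal decay on scale $\e$ on \emph{both} sides of the interface. For $E=0$ this is indeed the case, but for $E=\pi^2/4$ it fails: on the Neumann--Neumann side $|x_1|<L$ the lowest transverse mode is the constant, with transverse eigenvalue $0<\pi^2/(4\e^2)$, so its longitudinal factor obeys $-U''=(\pi^2/(4\e^2)+\l)U$ and is \emph{oscillatory}, not decaying. A single separable exponential mode therefore cannot absorb the derivative mismatch, and moreover the oscillatory component generated at $x_1=L$ propagates undamped to $x_1=-L$, coupling the two interfaces. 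What is actually required is the full solution of the non-separable model problem~(\ref{3.5}) equipped with the Sommerfeld-type radiation condition~(\ref{3.9}); Lemma~\ref{lm3.3} is precisely the statement that this problem has no non-trivial bounded homogeneous solution at $\mu=0$ (no virtual level at $E=\pi^2/4$), and Lemma~\ref{lm3.4} then yields the uniform bound that makes the corrector small. Without this ingredient your estimate $\|\op K_\e\|_0\leqslant C\e^{3/2}$ cannot be closed for $E=\pi^2/4$: you should replace the single-mode ansatz by the model-problem solution and invoke (or reprove) Lemmas~\ref{lm3.3}--\ref{lm3.4}, after which the two approaches essentially coincide.
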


Let us discuss the main results. The fact that the action of the effective operator is determined by the identity (\ref{2.4}) is quite expectable and the main nontriviality is in the boundary condition. In the first case the assumption $L=\e\ell$ means that the domain $\Pi^{(\e)}$ with partition of the boundary into the subsets $\g^{(\e)}_{\e\ell}$, $\G^{(\e)}_{\e\ell}$ can be rescaled to the fixed domain $\Pi^{(1)}$ with the fixed partition $\g^{(1)}_{\ell}$, $\G^{(1)}_{\ell}$. And since after rescaling we in fact study the convergence of the operator $\big(\op{H}^{(1)}-\frac{\pi^2}{4}-\e^2\l\big)^{-1}$, the effective boundary condition at zero is determined by the operator $\op{H}^{(1)}_\ell$ in the rescaled domain $\Pi^{(1)}$. More precisely, the effective boundary condition depends on the spectral structure of the threshold $\pi^2/4$ of the essential spectrum of $\op{H}^{(1)}_\ell$. If $\ell$ is critical, it was shown in \cite{pap1} that there exists a virtual level at the threshold of the essential spectrum, see problem (\ref{3.1}) below. And the structure of this solution determines the effective boundary condition in this case. If $\ell$ is non-critical,  virtual levels are absent,  and exactly this fact implies the effective Dirichlet boundary condition at zero.

Similar situation occurs, if $L$ is fixed. In this case we again rescale the domain $\Pi^{(\e)}$ to $\Pi^{(1)}$. The sets $\g^{(\e)}_{L}$ and $\G^{(\e)}_{L}$ become
$\g^{(1)}_{L\e^{-1}}$ and $\G^{(1)}_{L\e^{-1}}$, while the resolvent becomes
$(\op{H}^{(1)}_{L\e^{-1}}-E-\e^2\l)^{-1}$. As $\e$ goes to zero, we employ the ideas of \cite{G1}, \cite{G2} this resolvent behaves approximately as a direct sum of two Laplacians in $\Pi^{(1)}$ subject to the Neumann condition on $\{x: x_2=1\}\cup\{x: x_1<0, x_2=0\}$ and to the Dirichlet condition on $\{x: x_1>0, x_2=0\}$. This operator again has no virtual levels associated with both the spectral points $E=0$ and $E=\pi^2/4$, see Lemmas~\ref{lm3.2},~\ref{lm3.3},~\ref{lm3.4}. And again this fact finally yields the Dirichlet condition at $x_1=\pm L$.

We also
observe that in the case $L$ being fixed, there is one more factor $\op{E}_E$ in the approximation for the original resolvent. And due to the presence of this function for each $f\in L_2(\Pi^{(\e)})$ we have
\begin{align*}
&\big((\efop-\l)^{-1}\op{E}_0\op{P}_\e f\big)(x_1)=0, \quad\ \  |x_1|>L,
\\
&\big((\efop-\l)^{-1}\op{E}_\frac{\pi^2}{4}\op{P}_\e f\big)(x_1)=0, \quad |x_1|<L.
\end{align*}
It means that the action of the effective resolvent is nontrivial only as $|x_1|<L$ for $E=0$ and as $|x_1|>L$ for $E=\pi^2/4$.

As we see, in both cases $L=\e\ell$ and $L=\mathrm{const}$ the effective Dirichlet condition appears once there is no virtual levels for the rescaled operator at the point $E$, where $E$ is involved in the perturbed resolvent. The presence of the virtual level gives rise either to nontrivial boundary conditions (\ref{2.6}) or to the absence of the boundary conditions. We believe that such influence of virtual levels on the effective boundary conditions is a general fact occurring not only in our model. In a more complicated model the threshold can be also an (embedded) eigenvalue. We conjecture that in this case the projection to the associated eigenfunction will be the leading term in the asymptotic expansion for the resolvent. And we also conjecture that the leading term in the asymptotic expansion of the perturbed resolvent will be a pole and the mentioned projection will be the associated residue.

\section{Preliminaries}

In this section we collect a series of auxiliary results which will be employed in the proofs of Theorems~\ref{th2.1},~\ref{th2.2}.

Let $\Pi^{(\e)}_a:=\Pi^{(\e)}\cap\{x: |x_1|<a\}$, $\Pi^{\pm}_a:=\Pi^{(1)}\cap\{x: \pm x_1>\pm a\}$. In what follows as $\e=1$ we omit the superscript $^{(1)}$ in the notations related to the identity $\e=1$. For instance, $\Pi=\Pi^{(1)}$, $\Pi_a=\Pi^{(1)}_a$.

In \cite[Th. 2.3]{pap1} a criterium for $\ell$ to be critical for the operator $\op{H}_\ell$ was proven. Namely, the number $\ell=\ell_n$ is critical, if and only if the boundary value problem
\begin{equation}\label{3.1}
-\Delta \phi_n=\frac{\pi^2\!\!}{4}\,\phi_n \quad\text{in} \quad \Pi,\quad
\phi_n=0\quad \text{on} \quad \g_{\ell_n},\qquad \frac{\p\phi_n}{\p x_2}=0\quad \text{on}\quad \G_{\ell_n}
\end{equation}
has a bounded solution belonging to $\H^1(\Pi_a)$ for each $a>0$ and satisfying the asymptotics
\begin{equation}\label{3.2}
\phi_n(x)=\sin\frac{\pi x_2}{2}+\Odr\big(\E^{-\sqrt{2}\pi x_1}\big),\quad x_1\to+\infty.
\end{equation}
This solution is unique. For even $n$ it is odd w.r.t.
the symmetry transformation
\begin{equation}\label{3.3}
(x_1,x_2)\mapsto (-x_1,1-x_2),
\end{equation}
and is even for odd $n$. The first auxiliary lemma describes certain properties of the functions $\phi_n$.

\begin{lemma}\label{lm3.1}
The identities
\begin{align*}
&\int\limits_0^1 \phi_n(0,x_2) \sin \frac{\pi}{2}x_2 \di x_2=(-1)^{n-1}\int\limits_0^1 \phi_n(0,x_2)\cos\frac{\pi}{2}x_2\di x_2,
\\
&\int\limits_0^1 \frac{\p\phi_n}{\p x_1}(0,x_2) \sin \frac{\pi}{2}x_2 \di x_2=(-1)^{n} \int\limits_0^1 \frac{\p\phi_n}{\p x_1}(0,x_2)\cos\frac{\pi}{2}x_2\di x_2,
\\
&\int\limits_0^1 \frac{\p\phi_n}{\p x_1}(0,x_2) \sin \frac{\pi}{2}x_2 \di x_2-\frac{\pi}{2}\int\limits_0^{\ell_n} \phi_n(x_1,0)\di x_1=0,
\\
&\int\limits_0^1 \frac{\p\phi_n}{\p x_1}(0,x_2)\cos\frac{\pi}{2}x_2\di x_2+ \frac{\pi}{2}\int\limits_{-{\ell_n}}^0 \phi_n(x_1,1)\di x_1=0,
\\
&\int\limits_0^1 \phi_n(0,x_2)  \sin\frac{\pi}{2}x_2 \di x_2 +
\frac{\pi}{2}\int\limits_0^{\ell_n} x_1\phi_n(x_1,0)\di x_1=\frac{1}{2},
\\
&\int\limits_0^1 \phi_n(0,x_2)  \cos\frac{\pi}{2}x_2 \di x_2 -\frac{\pi}{2}\int\limits_{-{\ell_n}}^0 x_1\phi_n(x_1,1)\di x_1=\frac{(-1)^{n-1}}{2}
\end{align*}
hold true.
\end{lemma}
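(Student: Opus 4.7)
The six identities split into two groups. Identities 1--2 are direct consequences of the parity of $\phi_n$ under the symmetry (\ref{3.3}) recorded in the paragraph preceding the lemma; the remaining four come from Green's second identity applied to $\phi_n$ paired with a test function $v$ that also satisfies the Helmholtz equation $-\Delta v=\tfrac{\pi^2}{4}v$, so that the bulk term cancels and only boundary data survive.

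The parity statement at $x_1=0$ gives $\phi_n(0,x_2)=(-1)^{n-1}\phi_n(0,1-x_2)$ and, after one $x_1$-differentiation, $\p_{x_1}\phi_n(0,x_2)=(-1)^n\p_{x_1}\phi_n(0,1-x_2)$; substituting $y=1-x_2$ in the left-hand integrals of identities 1, 2 and using $\sin\tfrac{\pi(1-y)}{2}=\cos\tfrac{\pi y}{2}$ produces the right-hand sides. For identities 3--6 I apply Green's formula on the truncated half-strips $\Omega_R=\Pi^+_0\cap\{x_1<R\}$ (for identities 3, 5) and $\Omega_R=\Pi^-_0\cap\{x_1>-R\}$ (for identities 4, 6), with test functions
\begin{equation*}
v_3=\sin\tfrac{\pi x_2}{2},\quad v_4=\cos\tfrac{\pi x_2}{2},\quad v_5=x_1\sin\tfrac{\pi x_2}{2},\quad v_6=x_1\cos\tfrac{\pi x_2}{2},
\end{equation*}
all of which solve the required Helmholtz equation. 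On the horizontal sides $x_2=0,1$, one of $v_j$ or $\p_{x_2}v_j$ vanishes, and combined with the Dirichlet condition on $\gamma_{\ell_n}$ and the Neumann condition on $\G_{\ell_n}$ this leaves precisely the integrals over $(0,\ell_n)$ at $x_2=0$ or $(-\ell_n,0)$ at $x_2=1$ appearing in the identities. The vertical cut $x_1=0$ contributes the $\phi_n(0,\cdot)$ or $\p_{x_1}\phi_n(0,\cdot)$ integrals.

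The right-hand sides $0$, $\tfrac12$, $\tfrac{(-1)^{n-1}}{2}$ arise from the behaviour at $x_1=\pm R$ as $R\to+\infty$. For $v_3,v_4$ one has $\p_{x_1}v\equiv0$, while (\ref{3.2}) together with its symmetric left-hand counterpart $\phi_n(x)=(-1)^{n-1}\cos\tfrac{\pi x_2}{2}+\Odr(\E^{\sqrt{2}\pi x_1})$ as $x_1\to-\infty$ forces $\p_{x_1}\phi_n(\pm R,\cdot)=\Odr(\E^{-\sqrt{2}\pi R})$, so the $x_1=\pm R$ piece vanishes in the limit and identities 3, 4 hold with zero right-hand side. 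For $v_5,v_6$ the decisive term $\phi_n(\pm R,x_2)\,\p_{x_1}v_j|_{x_1=\pm R}$ tends to $\int_0^1\sin^2\tfrac{\pi x_2}{2}\di x_2=\tfrac12$ and to $(-1)^{n-1}\int_0^1\cos^2\tfrac{\pi x_2}{2}\di x_2=\tfrac{(-1)^{n-1}}{2}$ respectively, while the accompanying term $v_j\,\p_{x_1}\phi_n=\pm R\cdot\Odr(\E^{-\sqrt{2}\pi R})$ still vanishes, producing exactly the right-hand sides of identities 5, 6.

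The bulk of the work is sign-tracking and checking which boundary piece carries Dirichlet versus Neumann data. The only genuine subtlety is at the mixed Dirichlet--Neumann corners $(\ell_n,0)$ and $(-\ell_n,1)$, where $\phi_n$ has only $\H^1$ regularity; to apply Green's formula cleanly I would first excise small half-discs of radius $\delta$ about each such corner and verify, using the standard $r^{1/2}$ corner expansion for mixed-boundary Laplace eigenfunctions, that the extra arc integrals vanish as $\delta\to 0$.
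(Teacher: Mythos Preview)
Your proposal is correct and follows essentially the same route as the paper: the first two identities from the parity under (\ref{3.3}), and the remaining four from integrating by parts (equivalently, Green's second identity) the relations $\int v\big(\Delta+\tfrac{\pi^2}{4}\big)\phi_n\,dx=0$ over the truncated half-strips with exactly your four test functions $\sin\tfrac{\pi x_2}{2}$, $\cos\tfrac{\pi x_2}{2}$, $x_1\sin\tfrac{\pi x_2}{2}$, $x_1\cos\tfrac{\pi x_2}{2}$, using (\ref{3.2}) and the parity to handle the limit $R\to+\infty$. Your extra remark about excising half-discs at the mixed corners to justify Green's formula is a legitimate refinement that the paper leaves implicit.
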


\begin{proof}

The first two identities follow easily from the parity of $\phi_n$ under the symmetry transformation (\ref{3.3}). The others can be obtained by integrating by parts in the integrals
\begin{align*}
&\int\limits_{\Pi_a^+} \sin\frac{\pi}{2} x_2 \left(\D+\frac{\pi^2\!\!}{4}\right) \phi_n \di x =0, && \int\limits_{\Pi_a^-} \cos\frac{\pi}{2} x_2 \left(\D+\frac{\pi^2\!\!}{4}\right) \phi_n \di x=0,
\\
&\int\limits_{\Pi_a^+} x_1\sin\frac{\pi}{2} x_2 \left(\D+\frac{\pi^2\!\!}{4}\right) \phi_n \di x =0, &&\int\limits_{\Pi_a^-} x_1\cos\frac{\pi}{2} x_2  \left(\D+\frac{\pi^2\!\!}{4}\right) \phi_n \di x=0.
\end{align*}
One should also take into consideration (\ref{3.2}) and the parity of $\phi_n$ under (\ref{3.3})  and pass then to the limit as $a\to+\infty$.
\end{proof}

The rest of this section is devoted to the study of two auxiliary problems which will be employed in the proof of Theorems~\ref{th2.1},~\ref{th2.2}. Let $h=h(x)$ be a compactly supported function belonging to $L_2(\Pi)$, $\mu$ be a small complex parameter. Denote $\G_+:=\{x: x_2=1\}$, $\G_-:=\{x: x_1<0,\ x_2=0\}$, $\g_*:=\{x: x_1>0, \ x_2=0\}$. Consider the boundary value problem
\begin{equation}\label{3.5}
(-\D-E+\mu^2) v=h\quad \text{in}\quad \Pi, \quad v=0\quad\text{on}\quad \g_*,\quad \frac{\p v}{\p x_2}=0\quad\text{on}\quad \G_+\cup\G_-,
\end{equation}
with $E=0$ or $E=\pi^2/4$. We assume that the solution behaves at infinity as
\begin{equation}\label{3.6}
\begin{aligned}
& v(x)=c_+(\mu) \E^{-\sqrt{\frac{\pi^2}{4}+\mu^2}x_1} \sin \frac{\pi x_2}{2} + \Odr\left( \E^{-\sqrt{\frac{9\pi^2}{4}+\mu^2}x_1}\right), && x_1\to+\infty,
\\
&v(x)=c_-(\mu)\E^{\mu x_1} + \Odr\left(\E^{\sqrt{\pi^2+\mu^2} x_1}\right), &&  x_1\to-\infty,
\end{aligned}
\end{equation}
for $E=0$, and
\begin{equation}\label{3.9}
\begin{aligned}
& v(x)=c_+(\mu) \E^{-\mu x_1} \sin \frac{\pi x_2}{2} + \Odr\left( \E^{-\sqrt{2\pi^2+\mu^2}x_1}\right), && x_1\to+\infty,
\\
&v(x)=c_-(\mu)\E^{\iu \frac{\pi}{2} x_1} + \Odr\left(\E^{\sqrt{\frac{3\pi^2}{4}+\mu^2} x_1}\right), &&  x_1\to-\infty,
\end{aligned}
\end{equation}
for $E=\pi^2/4$. In both case $c_\pm(\mu)$ are some constants.

In the case $E=0$ the solvability of the similar problem but with the Dirichlet condition on $\G_+$ was studied in \cite[Sec. 5]{Bor-MSb06} (if one assumes in \cite{Bor-MSb06} that $d=\pi$ and the right hand side in \cite[eq. (5.1)]{Bor-MSb06} is even w.r.t. $x_2$). The technique used in this paper is the same as that in \cite[Sec. 4]{pap1}. The type of the boundary condition on $\G_+$ is completely inessential for this technique. This is why all the results established in \cite[Sec. 5]{Bor-MSb06} are valid in our case up to minor changes related to the other boundary condition on $\G_+$. We formulate the needed results below without adducing the proofs.

Let $a>0$ be such that the support of $h$ lies in $\Pi_a$.

\begin{lemma}\label{lm3.2}
Let $E=0$, $\mu$ be complex and sufficiently small, $b>0$ be a fixed number. 
Then the problem (\ref{3.5}), (\ref{3.6}) is uniquely solvable. The operator mapping the function $h$ into the solution of the problem (\ref{3.5}), (\ref{3.6}) is bounded as that from $L_2(\Pi_a)$ into $\H^1(\Pi_b)$
and is holomorphic in $\mu$. As $|x_1|\geqslant a$ the solution (\ref{3.5}), (\ref{3.6}) can be represented as
\begin{equation}\label{3.7}
\begin{aligned}
& v(x)=\sum\limits_{m=1}^{\infty} c_m^+(\mu) \E^{-\sqrt{\pi^2
\left(m-\frac{1}{2}\right)^2+\mu^2} x_1} \sin\pi\left(m-\frac{1}{2}\right) x_2, && x_1\geqslant a,
\\
&v(x)=c_0^-(\mu) \E^{\mu x_1} + \sum\limits_{m=1}^{\infty} c_m^-(\mu) \E^{\sqrt{\pi^2 m^2+\mu^2} x_1} \sin\pi\left(m-\frac{1}{2}\right)(1-x_2), && x_1\leqslant -a,
\end{aligned}
\end{equation}
where the coefficients $c_m^\pm$ are holomorphic in $\mu$ and satisfy the uniform in $\mu$ estimate
\begin{equation}\label{3.8}
|c_0^-(\mu)|^2+ \sum\limits_{m=1}^{\infty} m \big(|c_m^+(\mu)|^2+|c_m^-(\mu)|^2\big) \leqslant C\|h\|_{L_2(\Pi_a)}^2.
\end{equation}
\end{lemma}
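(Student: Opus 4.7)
The plan is to reduce the problem on the unbounded strip $\Pi$ to a boundary value problem on the bounded rectangle $\Pi_a$ by separation of variables on the exterior half-strips, and then to apply analytic Fredholm theory in the parameter $\mu$ as in \cite[Sec.~4]{pap1}. First, since $\supp h\subset\Pi_a$, the function $v$ satisfies $(-\D+\mu^2)v=0$ in each of $\Pi^{\pm}_a$. Separating variables in $x_2$ using the cross-sectional eigenfunctions (Dirichlet at $x_2=0$ and Neumann at $x_2=1$ on the right, Neumann--Neumann on the left) produces the modal expansion (\ref{3.7}), once the boundedness conditions (\ref{3.6}) are imposed to select the exponentially decaying branches on each side and to retain on the left the bounded zeroth mode $c_0^-(\mu)\E^{\mu x_1}$, which degenerates to a constant at $\mu=0$.

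Next, I would rewrite the full problem as a boundary value problem on $\Pi_a$ equipped with the original mixed conditions on $\p\Pi\cap\p\Pi_a$ together with nonlocal Dirichlet-to-Neumann conditions at $x_1=\pm a$ read off from (\ref{3.7}). The crucial observation is that these DtN operators $T_\pm(\mu)$ are holomorphic in $\mu$ near zero: the $m$-th modal symbol on the right is $\sqrt{\pi^2(m-\frac{1}{2})^2+\mu^2}$; on the left the higher symbols are $\sqrt{\pi^2 m^2+\mu^2}$, while the zeroth symbol is simply $\mu$. This is precisely the reason the ansatz (\ref{3.6}) builds $\mu$ (rather than $\mu^2$) into the leading left-hand term. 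The resulting operator pencil $\mathcal A(\mu)\colon \Ho^1(\Pi_a,\g_*\cap\overline{\Pi_a})\to\Ho^1(\Pi_a,\g_*\cap\overline{\Pi_a})^*$ is Fredholm of index zero and depends holomorphically on $\mu$.

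Invertibility at $\mu=0$ reduces to uniqueness. Suppose $v\in\Hloc^1(\Pi)$ solves the homogeneous problem with the representation (\ref{3.7}) at $\mu=0$, so that $v(x)\to c_0^-$ as $x_1\to-\infty$. Integrating $(-\D v)\overline v$ over $\Pi_R$ and passing to $R\to\infty$, the flux contributions on $x_1=+R$ vanish by exponential decay of every mode, while those on $x_1=-R$ vanish because the zeroth mode carries zero normal derivative and the higher modes decay; the lateral integrals vanish by the mixed boundary conditions. Hence $\int_\Pi|\nabla v|^2\,dx=0$, so $v$ is constant, and the Dirichlet condition on $\g_*$ forces $v\equiv 0$. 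Analytic Fredholm theory then furnishes a holomorphic inverse $\mathcal A(\mu)^{-1}$ on a complex neighborhood of $0$, yielding unique solvability, holomorphic dependence of $v$ on $\mu$, and the bound $\|v\|_{\H^1(\Pi_b)}\leqslant C\|h\|_{L_2(\Pi_a)}$. The estimate (\ref{3.8}) follows because the weighted $\ell^2$-sum $\sum_{m\geqslant 1} m(|c_m^+(\mu)|^2+|c_m^-(\mu)|^2)$ is equivalent to the $\H^{1/2}$-norm of the traces $v|_{x_1=\pm a}$ (the weight $m$ matching $\sqrt{\lambda_m}\sim\pi m$), while $|c_0^-(\mu)|^2$ is the square of the projection onto the constant mode at $x_1=-a$; the full trace is controlled by $\|v\|_{\H^1(\Pi_a)}$ and hence by $\|h\|_{L_2(\Pi_a)}$.

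The main obstacle is the uniqueness argument at $\mu=0$: the zeroth left mode is merely a constant, so the candidate solution need not belong to $L_2(\Pi)$, and Green's identity must be justified mode-by-mode from the explicit expansion (\ref{3.7}) rather than from integrability. This is the precise mechanism behind the absence of a virtual level at the threshold $E=0$ for this model geometry, and it explains why (\ref{3.6}) keeps $c_0^-(\mu)$ as a separate parameter and why the approach of \cite{pap1} (adapted from \cite{Bor-MSb06}) is needed in place of a direct application of standard elliptic theory on the unbounded domain.
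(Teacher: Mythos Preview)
Your proposal is correct and follows precisely the approach the paper invokes: the paper does not give an independent proof of Lemma~\ref{lm3.2} but refers to \cite[Sec.~5]{Bor-MSb06} and \cite[Sec.~4]{pap1}, whose method is exactly the reduction to a bounded domain via modal expansions on the exterior half-strips, analytic Fredholm theory for the resulting $\mu$-dependent operator pencil, and a uniqueness argument at $\mu=0$ exploiting that the only non-decaying mode is constant. Your treatment of the uniqueness step (Green's identity over $\Pi_R$ with mode-by-mode control of the flux at $x_1=-R$) is the correct simplification for $E=0$; the more delicate variant with the weight $x_1$ that the paper carries out in Lemma~\ref{lm3.3} is needed only for $E=\pi^2/4$, where the threshold mode oscillates rather than being constant.
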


In the case $E=\pi^2/4$  one can again employ the same technique from \cite[Sec. 5]{Bor-MSb06}. All the calculations remain true the same up to some minor changes. The only substantial change is the proof of an analogue of Lemma~5.3 from \cite{Bor-MSb06}. Although the idea of the proof is the same, the changes are not so minor. This is why we prove below the needed statement.

\begin{lemma}\label{lm3.3}
The problem (\ref{3.5}), (\ref{3.9}) with $E=\frac{\pi^2}{4}$, $\mu=0$ and $h=0$ has the trivial solution only.
\end{lemma}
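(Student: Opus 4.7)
The plan is to combine a Green's identity on the truncated strip $\Pi_a := \Pi\cap\{|x_1|<a\}$ with two Poincar\'e inequalities applied separately on the half-strips $\Pi^\pm := \Pi\cap\{\pm x_1>0\}$ (write $\Pi^\pm_a := \Pi^\pm\cap\Pi_a$). Multiplying the equation by $\bar v$, integrating over $\Pi_a$, and using the Dirichlet condition on $\g_*$ together with the Neumann condition on $\G_+\cup\G_-$ to kill the horizontal boundary terms yields
\begin{equation*}
\int\limits_{\Pi_a}\!\Bigl(|\nabla v|^2-\tfrac{\pi^2}{4}|v|^2\Bigr)\di x = \int\limits_0^1\!\bar v\,\p_{x_1}v\,\Big|_{x_1=a}\di x_2 - \int\limits_0^1\!\bar v\,\p_{x_1}v\,\Big|_{x_1=-a}\di x_2,
\end{equation*}
whose left-hand side is real for every $a$.

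The next step is to evaluate the two boundary integrals using the asymptotics (\ref{3.9}) and the attendant Fourier expansions of $v$ in the transverse eigenmodes on $\Pi^\pm$. On $\Pi^+$ the leading profile $c_+\sin(\pi x_2/2)$ is orthogonal in $L_2(0,1)$ to the transverse modes $\sin(\pi(m-\tfrac12)x_2)$, $m\geqslant 2$, that make up the exponentially decaying remainder, so the contribution at $x_1=a$ vanishes as $a\to+\infty$. On $\Pi^-$ the constant-in-$x_2$ mode $c_-\E^{\iu\pi x_1/2}$ is orthogonal to the decaying modes $\cos(\pi m x_2)$, $m\geqslant 1$, and a short computation gives
\begin{equation*}
\int\limits_0^1\!\bar v\,\p_{x_1}v\,\Big|_{x_1=-a}\di x_2 = \frac{\iu\pi}{2}|c_-|^2 + o(1),\qquad a\to+\infty,
\end{equation*}
so the right-hand side of the Green's identity tends to $-(\iu\pi/2)|c_-|^2$. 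Since the identity is real for every $a$, its imaginary part must vanish in the limit, which forces $c_-=0$.

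Once $c_-=0$ the solution $v$ decays exponentially at $-\infty$, hence the transverse mean $g_0(x_1) := \int_0^1 v(x_1,x_2)\di x_2$ on $\Pi^-$ satisfies $g_0''+(\pi^2/4)g_0=0$ together with $g_0(-\infty)=0$; the bounded solutions of this ODE are pure oscillations, so $g_0\equiv 0$. The Poincar\'e--Wirtinger inequality with spectral gap $\pi^2$ (Neumann--Neumann cross-section with vanishing mean) and the Dirichlet--Neumann Poincar\'e inequality with spectral gap $\pi^2/4$ then give
\begin{equation*}
\int\limits_{\Pi^-_a}\!\Bigl(|\nabla v|^2-\tfrac{\pi^2}{4}|v|^2\Bigr)\di x \geqslant \frac{3\pi^2}{4}\int\limits_{\Pi^-_a}|v|^2\di x,\qquad \int\limits_{\Pi^+_a}\!\Bigl(|\nabla v|^2-\tfrac{\pi^2}{4}|v|^2\Bigr)\di x \geqslant \int\limits_{\Pi^+_a}|\p_{x_1}v|^2\di x,
\end{equation*}
both non-negative. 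Since their sum vanishes as $a\to+\infty$ by the identity above (now with $c_-=0$), each of them tends to zero, so $v\equiv 0$ on $\Pi^-$ and $\p_{x_1}v\equiv 0$ on $\Pi^+$. The continuity of the $H^1_{loc}$-trace of $v$ at $x_1=0$ forces $v(0^+,x_2)=v(0^-,x_2)=0$, and since $v$ is $x_1$-independent on $\Pi^+$ this gives $v\equiv 0$ on $\Pi^+$ as well, hence $v\equiv 0$ on $\Pi$.

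The main technical obstacle will be the careful analysis of the two boundary integrals in the Green's identity: one must appeal to the explicit orthogonality of transverse eigenmodes on each cross-section to verify that the non-decaying leading profiles produce no residual flux and that the only surviving contribution is the imaginary term at $x_1=-a$ proportional to $|c_-|^2$.
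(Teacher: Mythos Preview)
Your argument is correct, and it reaches the same conclusion as the paper by a genuinely different route. The paper multiplies the equation by the Rellich-type weight $x_1\,\p_{x_1}\bar v$ and integrates by parts; after passing to the limit $a\to+\infty$ this produces a single identity
\[
0=-\tfrac{\iu\pi}{2}|c_-|^2+2\!\int_{\{x_1>0\}}\!|\p_{x_1}v|^2\di x+2\!\int_{\{x_1<0\}}\!|\p_{x_1}v^\bot|^2\di x,
\]
so that $c_-=0$ (imaginary part) and $\p_{x_1}v=0$ on $\Pi^+$, $\p_{x_1}v^\bot=0$ on $\Pi^-$ (real part) fall out simultaneously, whence $v$ is $x_1$-independent and therefore zero. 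Your approach instead uses the plain energy multiplier $\bar v$: the imaginary part of the resulting Green's identity still kills $c_-$, but the real part only says that the quadratic form $\int(|\nabla v|^2-\tfrac{\pi^2}{4}|v|^2)$ vanishes, and you then invoke the transverse Poincar\'e inequalities on each half-strip (with the vanishing of the zero mode $g_0$ on $\Pi^-$) to force $v\equiv0$.

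What each approach buys: the paper's Rellich identity is more compact but requires controlling the corner singularity $v\sim r^{1/2}\sin(\tht/2)$ at the Dirichlet--Neumann junction, since one is effectively integrating second derivatives of $v$ by parts. Your energy identity needs only $v\in\H^1_{loc}$ and $\D v\in L_2$, so the corner analysis is unnecessary; the price is the extra, but entirely standard, Poincar\'e step. Both are valid proofs of the lemma.
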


\begin{proof}
Suppose such solution exists. In the same way as in \cite[Lm. 4.2]{Bor-MSb06} one can check that in a vicinity of zero this solution behaves as
\begin{equation*}
v(x)=\a r^{1/2}\sin  \frac{\tht}{2}  + \Odr(r),\quad r\to 0,
\end{equation*}
where $(r,\tht)$ are the polar coordinates associated with $x$. Bearing this fact and (\ref{3.5}), (\ref{3.9}) in mind, we take any fixed $a>0$ and integrate by parts as follows,
\begin{equation}\label{3.11}
\begin{aligned}
0=&\int\limits_{\Pi_a}  x_1 v \left(\D-\frac{\pi^2}{4}\right) \frac{\p \overline{v}}{\p x_1} \di x
\\
=&\int\limits_0^1 \left(x_1 v \frac{\p^2 \overline{v}}{\p x_1^2}- \frac{\p \overline{v}}{\p x_1} \frac{\p}{\p x_1} (x_1 v) \right)\Bigg|_{x_1=-a}^{x_1=a}\di x_2 + 2 \int\limits_{\Pi_a} \left|\frac{\p v}{\p x_1}\right|^2\di x.
\end{aligned}
\end{equation}
By the separation of variables for $x_1<0$ we can represent $v$ as
\begin{equation*}
v(x)=c_-(0)\E^{\iu \frac{\pi}{2} x_1} + v^\bot(x),
\end{equation*}
where
\begin{equation*}
\int\limits_0^1 v^\bot(x)\di x_2=0\quad\text{for each}\quad x_1\in(-\infty,0).
\end{equation*}
We substitute this representation into (\ref{3.11}) and pass to the limit as $a\to+\infty$. It yields
\begin{align*}
0=&-\frac{\iu \pi |c_-(0)|^2}{2} + 2\int\limits_{\Pi\cap\{x: x_1<0\}} \left|\frac{\p v}
{\p x_1}\right|^2 \di x + 2 \int\limits_{\Pi\cap\{x: x_1<0\}} \left|\frac{\p v^\bot}{\p x_1}\right|\di x.
\end{align*}
Hence,
\begin{equation*}
c_-(0)=0,\quad \frac{\p v^\bot}{\p x_1}=0 \quad \text{as}\quad x_1<0,\quad \frac{\p v}{\p x_1}=0\quad \text{as} \quad x_1>0.
\end{equation*}
It implies that the function $v$ is independent of $x_1$ and thus $v\equiv0$.
\end{proof}
All other arguments of \cite[Sec. 5]{Bor-MSb06} can be easily adapted to the problem (\ref{3.5}), (\ref{3.6}) with $E=\frac{\pi^2}{4}$. The result is formulated in the next

\begin{lemma}\label{lm3.4}
Let $E=\pi^2/4$, $\mu$ be complex and sufficiently small, $b>0$ be a fixed number. 
Then the problem (\ref{3.5}), (\ref{3.9}) is uniquely solvable. The operator mapping the function $h$ into the solution of the problem (\ref{3.5}), (\ref{3.9}) is bounded as that from $L_2(\Pi_a)$ into $\H^1(\Pi_b)$ 
and is holomorphic in $\mu$. As $|x_1|\geqslant a$ the solution (\ref{3.5}), (\ref{3.9}) can be represented as
\begin{equation}\label{3.14}
\begin{aligned}
& v(x)=c_0^+(\mu) \E^{-\mu x_1} \sin \frac{\pi x_2}{2}
\\
&\hphantom{v(x)=}+  \sum\limits_{m=2}^{\infty} c_m^+(\mu) \E^{-\sqrt{\pi^2
\left(m-\frac{1}{2}\right)^2-\frac{\pi^2}{4}+\mu^2} x_1} \sin\pi\left(m-\frac{1}{2}\right) x_2, && x_1\geqslant a,
\\
&v(x)= c_0^-(\mu)\E^{\iu \sqrt{\frac{\pi^2}{4}-\mu^2} x_1} 
\\
&\hphantom{v(x)=}+ \sum\limits_{m=2}^{\infty} c_m^-(\mu) \E^{\sqrt{\pi^2 m^2-\frac{\pi^2}{4}+\mu^2} x_1} \sin\pi\left(m-\frac{1}{2}\right)(1-x_2), && x_1\leqslant -a,
\end{aligned}
\end{equation}
where the coefficients $c_m^\pm$ are holomorphic in $\mu$ and satisfy the uniform in $\mu$ estimate (\ref{3.8}).
\end{lemma}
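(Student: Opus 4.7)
The plan is to mimic the strategy of [Bor-MSb06, Sec.~5] (as adapted in [pap1, Sec.~4]), inserting Lemma~\ref{lm3.3} at the single point where the argument must differ from the case $E=0$. The idea is to reduce (\ref{3.5}), (\ref{3.9}) on the unbounded strip $\Pi$ to a compact perturbation of the identity on a bounded subdomain $\Pi_{\tilde a}$, with $\tilde a>a$ and $\supp h\subset\Pi_a$; holomorphy in $\mu$ and the expansion (\ref{3.14}) at infinity are then standard by-products.

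First I would truncate $\Pi$ at $x_1=\pm\tilde a$ and equip the artificial cross-sections with non-local Dirichlet-to-Neumann-type boundary operators that encode the admissible far-field modes in (\ref{3.9}). On $\{x_1=\tilde a\}$ this uses the Dirichlet-Neumann transverse basis and selects, mode by mode, the decaying exponential; on $\{x_1=-\tilde a\}$ it uses the Neumann-Neumann transverse basis and selects the outgoing wave for the threshold mode together with the decaying exponentials for the higher ones. Each of these boundary operators depends holomorphically on $\mu$ in a neighbourhood of zero, and the problem on $\Pi_{\tilde a}$ can be written as $(\mathds{I}+\op{K}(\mu))v=F_h$ in $\H^1(\Pi_{\tilde a})$ with $\op{K}(\mu)$ compact by the Rellich theorem.

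Next I would invoke Lemma~\ref{lm3.3} to exclude a kernel at $\mu=0$: any element of $\ker(\mathds{I}+\op{K}(0))$ extends via the mode series to a global solution of (\ref{3.5}), (\ref{3.9}) with $h=0$ and $\mu=0$, hence vanishes by that lemma. The analytic Fredholm theorem then produces a holomorphic inverse $(\mathds{I}+\op{K}(\mu))^{-1}$ in a neighbourhood of $\mu=0$, which yields unique solvability, holomorphy of $v$ in $\mu$, and a uniform bound $\|v\|_{\H^1(\Pi_{\tilde a})}\leqslant C\|h\|_{L_2(\Pi_a)}$. The $\H^1(\Pi_b)$-bound for an arbitrary fixed $b>0$ follows by choosing $\tilde a>b$ from the outset and using the exponential decay of the tails in (\ref{3.14}); the expansion itself is then merely separation of variables in the far-field regions $|x_1|\geqslant a$.

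The coefficient estimate (\ref{3.8}) is obtained by identifying $c_m^\pm(\mu)$ with Fourier coefficients of the Dirichlet and Neumann traces of $v$ on the cross-sections $\{x_1=\pm a\}$ in the respective transverse bases; an $\H^{1/2}\times\H^{-1/2}$ trace pairing produces the weight $m$, and the whole sum is controlled by $\|v\|_{\H^1(\Pi_{\tilde a})}^2\leqslant C\|h\|_{L_2(\Pi_a)}^2$. The delicate step I expect is the threshold behaviour at $\mu=0$: the mode $\E^{\iu\frac{\pi}{2}x_1}$ does not decay as $x_1\to-\infty$, so the ``outgoing'' selection in (\ref{3.9}) encodes a radiation condition rather than a genuine exponential trap, and the artificial boundary operator at $x_1=-\tilde a$ degenerates at $\mu=0$. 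It is precisely Lemma~\ref{lm3.3}, i.e.\ the absence of a threshold virtual level, that keeps $\mathds{I}+\op{K}(\mu)$ invertible at $\mu=0$ and makes the perturbative argument go through.
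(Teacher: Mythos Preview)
Your proposal is correct and matches the paper's approach exactly. The paper does not give a standalone proof of Lemma~\ref{lm3.4}; it simply states that the arguments of \cite[Sec.~5]{Bor-MSb06} carry over to $E=\pi^2/4$ with only minor changes, the one substantial modification being the uniqueness statement at $\mu=0$, which is isolated and proved separately as Lemma~\ref{lm3.3}. Your sketch is precisely a fleshed-out version of this: reduce to a bounded domain via mode-selecting artificial boundary operators, obtain a holomorphic compact perturbation of the identity, and invoke Lemma~\ref{lm3.3} together with analytic Fredholm to get invertibility near $\mu=0$.
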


\section{Proof of Theorem~\ref{th2.1}}

In this section we prove Theorem~\ref{th2.1}. Given $f\in L_2(\Pi^{(\e)})$, we denote
\begin{equation*}
u_\e:=\left(\op{H}^{(\e)}_{\e\ell}-\frac{\pi^2}{4\e^2}-\l\right)^{-1} f.
\end{equation*}
We rescale the variables $x\mapsto x\e^{-1}$ keeping the notation ``$x$'' for the new rescaled variables. It leads us to another representation for $u_\e$,
\begin{equation}\label{4.1}
u_\e=\e^2 \widetilde{u}_\e(\e\cdot),\quad \widetilde{u}_\e:= \left(\op{H}_{\ell}-\frac{\pi^2}{4}-\e^2\l\right)^{-1} f(\e\cdot).
\end{equation}
In what follows we employ exactly this representation. 

At our next step we introduce the function $\widetilde{u}_1$ as the solution to the boundary value problem
\begin{equation}\label{v3.3}
\begin{gathered}
\left(-\D-\frac{\pi^2}{4}-\e^2\l\right)\widetilde{u}_1=f\quad\text{in}\quad \Pi,
\\
\widetilde{u}_1=0\quad\text{on}\quad\g_0\cup\{x: x_1=0,\ 0<x_2<1\},\qquad
\frac{\p \widetilde{u}_1}{\p x_2}=0\quad\text{on}\quad \G_0.
\end{gathered}
\end{equation}
The function $u_1$ can be constructed explicitly by the separation of variables,
\begin{align}
&\widetilde{u}_1(x,\e)=\sum\limits_{m=1}^{\infty} U_m(x,\e)\chi_m(x), \label{v3.4}
\\
&U_m(x_1,\e)=\left\{
\begin{aligned}
&  \int\limits_0^{+\infty} \frac{\E^{-k_m|x_1-t_1|}-\E^{-k_m(x_1+t_1)}}
{2k_m} f_m(\e t_1)\di t_1, && x_1>0,
\\
&  \int\limits_{-\infty}^0 \frac{\E^{-k_m|x_1-t_1|}-\E^{k_m(\e\sqrt{-\l}) (x_1+t_1)}}{2 k_m} f_m(\e t_1)\di t_1, && x_1<0,
\end{aligned}
\right.\label{v4.43}
\\
&E_m:=\pi^2\left(m-\frac{1}{2}\right)^2,\quad
\chi_m(x):=\left\{
\begin{aligned}
&\sqrt{2}\sin\pi\left(m-\frac{1}{2}\right) x_2, && x_1>0,
\\
&\sqrt{2}\sin\pi\left(m-\frac{1}{2}\right)(1-x_2), && x_1<0,
\end{aligned}
\right. \nonumber
\\
&k_0:=\e\sqrt{-\l},\quad k_m:=\sqrt{E_m-E_1-\e^2\l},
\quad
f_m(\e x_1):=\int\limits_0^1 \chi_m(x) f(\e x)\di x_2. \nonumber
\end{align}
where the branch of the square root in the definition of $k_m$ is fixed by the requirement $\sqrt{1}=1$.

The series (\ref{v3.4}) converges in $\H^2(\Pi\setminus\{x:  x_1=0\})\cap\H^1(\Pi)$ that can be shown by analogy with \cite[Lm 3.1]{Bor-MSb06}. It also clear that the Parseval identity
\begin{equation}
\|f\|_{L_2(\Pi^{(\e)})}^2=\e^2\sum\limits_{m=1}^{\infty} \|f_m(\e\,\cdot)\|_{L_2(\mathds{R})}^2
\label{v3.25}
\end{equation}
holds true. Hereinafter by $C$ we indicate various inessential constants independent of $\e$ and $f$.

\begin{lemma}\label{vlm3.8}
Let $a>0$ be a fixed number. The estimates
\begin{align}
&\|\widetilde{u}_1-U_1\chi_1\|_{\H^1(\Pi)}\leqslant C 
\e^{-1}\|f\|_{L_2(\Pi^{(\e)})},\label{v3.24a}
\\
&\|U_1\chi_1\|_{\H^1(\Pi_a)}\leqslant C 
\e^{-3/2} \|f\|_{L_2(\Pi_\e)}\label{v3.24b}
\end{align}
hold true. 
\end{lemma}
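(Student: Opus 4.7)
The two bounds address complementary parts of the series (\ref{v3.4}). Bound (\ref{v3.24a}) controls the tail $\widetilde{u}_1-U_1\chi_1=\sum_{m\geqslant 2}U_m\chi_m$, for which every $k_m=\sqrt{E_m-E_1-\e^2\l}$ is uniformly bounded from below by a positive constant independent of $\e$ (indeed $k_m\asymp m$). Bound (\ref{v3.24b}), by contrast, isolates the first transverse mode, whose propagation constant $k_1=\e\sqrt{-\l}$ degenerates as $\e\to 0$; this is the delicate estimate.

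For the tail I would regard each $U_m$ in (\ref{v4.43}) as the action on $f_m(\e\,\cdot)$ of the Green's function of $-\p_{x_1}^2+k_m^2$ on the half-axis with a Dirichlet condition at $x_1=0$. A straightforward computation shows that this kernel has $L_1(\mathds{R})$-norm of order $k_m^{-2}$ and its $x_1$-derivative has $L_1(\mathds{R})$-norm of order $k_m^{-1}$, uniformly in small complex $\e$. Young's inequality (together with Schur's test for the non-convolution piece of the kernel, i.e.\ the $\E^{-k_m(x_1+t_1)}$ part) then gives
\begin{equation*}
\|U_m\|_{L_2(\mathds{R})}\leqslant\frac{C}{k_m^2}\,\|f_m(\e\,\cdot)\|_{L_2(\mathds{R})},\qquad \|\p_{x_1}U_m\|_{L_2(\mathds{R})}\leqslant\frac{C}{k_m}\,\|f_m(\e\,\cdot)\|_{L_2(\mathds{R})}.
\end{equation*}
Combined with $\|\chi_m\|_{L_2(0,1)}=1$, $\|\p_{x_2}\chi_m\|_{L_2(0,1)}=\pi(m-\tfrac12)$, and $k_m\asymp m$, this yields $\|U_m\chi_m\|_{\H^1(\Pi)}^2\leqslant Cm^{-2}\|f_m(\e\,\cdot)\|_{L_2(\mathds{R})}^2$. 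Summing over $m\geqslant 2$, using the orthogonality of the $\chi_m$, and invoking the Parseval identity (\ref{v3.25}) delivers (\ref{v3.24a}).

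For (\ref{v3.24b}) the naive bound $|K_1|\leqslant\E^{-k_1|x_1-t_1|}/(2k_1)$ would produce only the inferior rate $\e^{-5/2}$, so the crucial step is to expose the cancellation at the origin built into the Green's function. Rearranging (\ref{v4.43}) for $x_1,t_1$ of the same sign one obtains
\begin{equation*}
K_1(x_1,t_1)=\frac{\sinh\bigl(k_1\min(|x_1|,|t_1|)\bigr)}{k_1}\,\E^{-k_1\max(|x_1|,|t_1|)}.
\end{equation*}
For $|x_1|<a$ the elementary inequality $\sinh z\leqslant z\cosh z$ bounds the $\sinh$-factor by $k_1 a\cosh(k_1 a)$, hence $|K_1(x_1,t_1)|\leqslant Ca\,\E^{-k_1|x_1-t_1|}$ with $C=C(a,\l)$; the problematic $1/k_1$ has been absorbed by the vanishing of $\sinh$ at the origin. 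Cauchy--Schwarz together with $\|\E^{-k_1|\cdot|}\|_{L_2(\mathds{R})}^2=k_1^{-1}$, integration in $x_1\in(-a,a)$, the relation $k_1=\e\sqrt{-\l}$, and $\|f_1(\e\,\cdot)\|_{L_2}\leqslant\e^{-1}\|f\|_{L_2(\Pi^{(\e)})}$ extracted from (\ref{v3.25}) then produce $\|U_1\|_{L_2(-a,a)}\leqslant C\e^{-3/2}\|f\|_{L_2(\Pi^{(\e)})}$. Applying the same scheme to the two branches of $\p_{x_1}K_1$, namely $\E^{-k_1|t_1|}\cosh(k_1 x_1)$ and $\sinh(k_1|t_1|)\E^{-k_1|x_1|}$, both of which are bounded uniformly on $\Pi_a$, gives the matching bound for $\p_{x_1}U_1$; the $\p_{x_2}$-contribution is immediate since $\p_{x_2}\chi_1$ is bounded.

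The main obstacle is precisely the cancellation in the $m=1$ kernel. Without it the $1/k_1\sim\e^{-1}$ prefactor would force a strictly worse power of $\e$, and it is the $\sinh$-representation above that turns the apparent $k_1^{-1}$ singularity into an $\Odr(1)$ quantity on the bounded slab $\Pi_a$, yielding the sharp rate $\e^{-3/2}$. The tail estimate (\ref{v3.24a}) is, in comparison, a routine separation-of-variables computation once one observes that $k_m\geqslant c>0$ uniformly for $m\geqslant 2$.
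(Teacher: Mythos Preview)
Your proof is correct and follows essentially the same route as the paper. For (\ref{v3.24b}) you use precisely the paper's key observation: rewriting the Green kernel as $K_1(x_1,t_1)=k_1^{-1}\sinh\big(k_1\min(|x_1|,|t_1|)\big)\E^{-k_1\max(|x_1|,|t_1|)}$, which is exactly the representation (\ref{v4.5}) split into cases, so that on $\Pi_a$ the apparent $k_1^{-1}$ singularity is cancelled and only the Cauchy--Schwarz factor $\|\E^{-k_1|\cdot|}\|_{L_2}\sim\e^{-1/2}$ survives. For (\ref{v3.24a}) the paper obtains the same bounds $\|U_m\|_{L_2}\leqslant C(E_m-E_1)^{-1}\|f_m(\e\cdot)\|_{L_2}$ and $\|U_m'\|_{L_2}\leqslant C(E_m-E_1)^{-1/2}\|f_m(\e\cdot)\|_{L_2}$ by the energy identity (testing the ODE for $U_m$ against $U_m$ itself) rather than by your kernel estimate plus Young/Schur; the two derivations are equivalent in strength and equally elementary.
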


\begin{proof}
The functions $U_m$ solve the problems
\begin{equation*}
-U''_m+(E_m-E_1-\e^2\l)U_m=f_m(\e\,\cdot),\quad x\in\mathds{R}\setminus\{0\},\quad U_m(0)=0,
\end{equation*}
and therefore satisfy the equations
\begin{equation*}
\|U'_m\|_{L_2(\mathds{R}_\pm)}^2 +(E_m-E_1-\e^2\l)\|U_m\|_{L_2(\mathds{R}_\pm)} ^2 = (f_m^{(\pm)}(\e\,\cdot),U_m)_{L_2(\mathds{R}_\pm)}.
\end{equation*}
In its turn, they imply
\begin{equation}\label{v4.1}
\|U_m\|_{L_2(\mathds{R}_\pm)}\leqslant \frac{\|f_m(\e\cdot)\|_{L_2(\mathds{R}_\pm)}}{E_m-E_1-\e\RE\l},\quad
\|U_m'\|_{L_2(\mathds{R}_\pm)}\leqslant \frac{\|f_m(\e\cdot)\|_{L_2(\mathds{R}_\pm)}}{\sqrt{E_m-E_1-\e\RE\l}}.
\end{equation}
These estimates and   (\ref{v3.25}) yield (\ref{v3.24a}).

To prove the second estimate, we represent $U_1$ as follows,
\begin{equation}
\begin{aligned}
&U_1(x,\e)=\E^{-\mu y_1} \int\limits_0^{x_1} \frac{\sinh \mu t_1}{\mu} f_1(\e t_1)\di t_1+\frac{\sinh\mu x_1}{\mu} \int\limits_{x_1}^{+\infty} \E^{-\mu t_1} f_1(\e t_1)\di t_1,\quad x_1>0
\\
&U_1(y,\e)=-\frac{\sinh\mu y_1}{\mu} \int\limits_{-\infty}^{x_1} \E^{\mu t_1} f_1(\e t_1)\di t_1+\E^{\mu x_1} \int\limits_{x_1}^0 \frac{\sinh \mu t_1}{\mu} f_1(\e t_1)\di t_1,\quad x_1<0,
\end{aligned}\label{v4.5}
\end{equation}
where $\mu=\e\sqrt{-\l}$. Employing this representation, the Parseval identity (\ref{v3.25}), and the Schwarz inequality, one can prove easily the estimate (\ref{v3.24b}).
\end{proof}

We construct $\widetilde{u}_\e$ as
\begin{equation}\label{v3.5a}
\widetilde{u}_\e=\widetilde{u}_1\xi_1+\widetilde{u}_2,
\end{equation}
where $\xi_1=\xi_1(x_1)$ is an infinitely differentiable cut-off function being one as $|x_1|>\ell+2$ and vanishing as $|x_1|<\ell+1$.  In  view of (\ref{v3.3}) the function $\widetilde{u}_2$ is given by the formulas
\begin{equation*}
\widetilde{u}_2=(\op{H}_\ell-E_1-\e^2\l)^{-1}g, \quad g:=f-(-\D-E_1-\e^2\l)\widetilde{u}_1\xi \in L_2(\Pi).
\end{equation*}
It follows from Lemma~\ref{vlm3.8} that
\begin{equation}\label{v3.18}
\|\widetilde{u}_1\|_{\H^1(\Pi_{\ell+2})}\leqslant  C\e^{-3/2}\|f\|_{L_2(\Pi^{(\e)})}.
\end{equation}
The last inequality  and the relation
\begin{equation*}
g=(1-\xi_1)f+\left(2\xi_1'\frac{\p}{\p x_1}+\xi_1''\right)\widetilde{u}_1
\end{equation*}
yield that the support of the
function $g$ is contained in $\Pi_{\ell+2}$ and the inequality
\begin{equation}\label{v4.31}
\|g\|_{L_2(\Pi_{\ell+2})}\leqslant C \e^{-3/2} \|f\|_{L_2(\Pi^{(\e)})}
\end{equation}
holds true.

Now we employ the results of \cite{pap1} and this is the crucial point in the proof. Namely, we apply Lemma~4.7 from the cited paper to the function $u_2$. It implies the following representation,
\begin{gather}
\widetilde{u}_2=\frac{\op{T} g}{\e\sqrt{-\l}} \phi + \widehat{u}_2,\label{v3.15}
\\
\|\widehat{u}_2\|_{L_2(\Pi_b)}\leqslant C
\|g\|_{L_2(\Pi_{\ell+2})}
\leqslant C 
\e^{-3/2} \|f\|_{L_2(\Pi^{(\e)})},
\label{v3.23}
\end{gather}
where 
the number $b>0$ is arbitrary but fixed, and in the estimate for $\widehat{u}_2$ we used (\ref{v4.31}). The functional $\op{T}$ is defined as
\begin{equation}\label{4.2}
\begin{aligned}
&\op{T}g:=\int\limits_{\Pi} g \phi_n\di x, && \phi=\phi_n, && \text{if}\quad \ell=\ell_n\quad\text{is critical},
\\
&\op{T}g:=0, && \phi=0, && \text{if}\quad \ell\quad\text{is not critical}.
\end{aligned}
\end{equation}
We remind that for critical $\ell$ the function $\phi_n$ is defined as the unique solution to the problem (\ref{3.1}).

Denote
\begin{equation*}
F_\pm(\e):= \frac{\e^{3/2}}{2\sqrt{-\l}}\int\limits_{\mathds{R}_\pm}
\E^{-\e\sqrt{-\l}|t_1|} f_1(\e t_1)\di t_1,\quad \mathds{R}_\pm:=\{t: \pm t\geqslant 0\}.
\end{equation*}

Since
\begin{equation*}
f_1(\e t_1)=\e^{-1/2}(\op{P}_\e f)(t_1,\e),
\end{equation*}
we see that
\begin{equation}\label{v4.34}
F_\pm(\e)= \frac{1}{2\sqrt{-\l}}\int\limits_{\mathds{R}_\pm}
\E^{-\sqrt{-\l}|t_1|} (\op{P}_\e f_1)(t_1,\e)\di t_1,
\end{equation}
Thus,
\begin{equation}
|F_+(\e)|+|F_-(\e)|\leqslant C\|f\|_{L_2(\Pi^{(\e)})}. \label{v4.35}
\end{equation}

\begin{lemma}\label{vlm4.4}
For critical $\ell=\ell_n$ the estimate
\begin{equation*}
\big|\op{T} g-\e^{-3/2}\sqrt{-2\l}\big(F_+ + (-1)^{n-1} F_-\big)\big|\leqslant C\e^{-1}\|f\|_{L_2(\Pi^{(\e)})}
\end{equation*}
holds true.
\end{lemma}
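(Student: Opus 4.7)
The plan is to compute $\op{T}g = \int_\Pi g\phi_n\,dx$ by using Green's identity to reduce it to a small set of boundary integrals capturing the mismatch between the boundary conditions of $\widetilde{u}_1$ and $\phi_n$. Substituting $g = f - (-\D-E_1-\e^2\l)(\xi_1\widetilde{u}_1)$, the identity $(-\D-E_1)\phi_n = 0$ together with the fact that $\xi_1\widetilde{u}_1$ vanishes near $\{|x_1|\leq\ell_n\}$ (so it shares the boundary conditions of $\phi_n$ on $\p\Pi$) gives, via Green's identity on $\Pi_A$ and the exponential decay of $\xi_1\widetilde{u}_1$, that $\int_\Pi(-\D-E_1-\e^2\l)(\xi_1\widetilde{u}_1)\phi_n = -\e^2\l\int_\Pi\xi_1\widetilde{u}_1\phi_n$. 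For the other piece $\int_\Pi f\phi_n$, Green's identity must be applied separately on $\Pi\cap\{0<x_1<A\}$ and $\Pi\cap\{-A<x_1<0\}$ to record the extra Dirichlet condition $\widetilde{u}_1(0,\cdot)=0$ and the BC mismatch on $\{x_2=0,\,0<x_1<\ell_n\}$ and $\{x_2=1,\,-\ell_n<x_1<0\}$. Combining these, cancelling the bulk $\e^2\l\int\widetilde{u}_1\phi_n$ terms, and estimating the residual $\e^2\l\int(1-\xi_1)\widetilde{u}_1\phi_n = O(\e^{1/2}\|f\|)$ via (\ref{v3.18}), one arrives at the reduction
\begin{equation*}
\op{T}g = \mathrm{BT}_1 + \mathrm{BT}_2 + O(\e^{-1}\|f\|_{L_2(\Pi^{(\e)})}),
\end{equation*}
where $\mathrm{BT}_1 := \int_0^{\ell_n}\phi_n(x_1,0)\p_{x_2}\widetilde{u}_1(x_1,0)\,dx_1 - \int_{-\ell_n}^0\phi_n(x_1,1)\p_{x_2}\widetilde{u}_1(x_1,1)\,dx_1$ and $\mathrm{BT}_2 := \int_0^1\phi_n(0,x_2)[\p_{x_1}\widetilde{u}_1(0^+,x_2)-\p_{x_1}\widetilde{u}_1(0^-,x_2)]\,dx_2$.

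Next I would substitute the mode expansion $\widetilde{u}_1=\sum_m U_m\chi_m$ from (\ref{v3.4}) and dispatch the modes with $m\geq 2$: the ODE $-U_m''+k_m^2 U_m=f_m(\e\cdot)$ together with $\RE k_m^2\sim m^2$ yields $\|U_m\|_{L_2}\leq C\|f_m(\e\cdot)\|_{L_2}/m^2$ and $|U_m'(0^\pm)|\leq C\|f_m(\e\cdot)\|_{L_2}/\sqrt{m}$, and Cauchy--Schwarz combined with Parseval (\ref{v3.25}) absorbs the entire $m\geq 2$ contribution into $O(\e^{-1}\|f\|)$. For $m=1$, direct differentiation of (\ref{v4.43}) and the definition (\ref{v4.34}) give $U_1'(0^+)=2\sqrt{-\l}\,\e^{-3/2}F_+$ and $U_1'(0^-)=-2\sqrt{-\l}\,\e^{-3/2}F_-$, so $\mathrm{BT}_2|_{m=1}$ becomes an explicit linear combination of $U_1'(0^\pm)$ with coefficients $\alpha_n^s:=\int_0^1\phi_n(0,x_2)\sin(\pi x_2/2)\,dx_2$ and $\alpha_n^c:=\int_0^1\phi_n(0,x_2)\cos(\pi x_2/2)\,dx_2$ (which satisfy $\alpha_n^s=(-1)^{n-1}\alpha_n^c$ by Lemma \ref{lm3.1}). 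The key technical step is then to approximate $U_1(x_1,\e)$ by its linearization $x_1 U_1'(0^\pm)$ on the bounded intervals $(0,\ell_n)$ and $(-\ell_n,0)$: writing $R(x_1):=U_1(x_1)-x_1 U_1'(0^\pm)$, the relations $R(0)=R'(0)=0$ and $R''(x_1)=\mu^2 U_1(x_1)-f_1(\e x_1)$ (with $\mu=\e\sqrt{-\l}$) combined with the integral representation $R(x_1)=\int_0^{x_1}(x_1-t)R''(t)\,dt$ and the a priori estimate $\|U_1\|_{L_2(\mathds{R})}\leq C(\l)\e^{-2}\|f_1(\e\cdot)\|_{L_2}$ (from the imaginary part of $\|U_1'\|^2+\mu^2\|U_1\|^2=\int f_1\overline{U_1}$) give $|R(x_1)|\leq C\e^{-1}\|f\|$ uniformly on $(-\ell_n,\ell_n)$. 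This turns $\mathrm{BT}_1|_{m=1}$ into a linear combination of $U_1'(0^+)\int_0^{\ell_n}x_1\phi_n(x_1,0)\,dx_1$ and $U_1'(0^-)\int_{-\ell_n}^0 x_1\phi_n(x_1,1)\,dx_1$ modulo $O(\e^{-1}\|f\|)$.

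The final step invokes the last two identities of Lemma \ref{lm3.1} to rewrite both $\int x_1\phi_n$ integrals in terms of $\alpha_n^{s,c}$; a short algebraic check then reveals that the $\alpha_n^c$-dependent parts of $\mathrm{BT}_1|_{m=1}$ and $\mathrm{BT}_2|_{m=1}$ cancel exactly, leaving $\frac{1}{\sqrt{2}}[U_1'(0^+)-(-1)^{n-1}U_1'(0^-)]=\e^{-3/2}\sqrt{-2\l}(F_+ + (-1)^{n-1}F_-)$, as required. The main obstacle will be the careful bookkeeping of the boundary contributions in the two Green's identities—particularly the terms arising from the Dirichlet line at $\{x_1=0\}$ and from the BC mismatch on $\{x_2=0,\,0<x_1<\ell_n\}\cup\{x_2=1,\,-\ell_n<x_1<0\}$—together with the uniform Taylor-remainder estimate for $U_1$; the elegant algebraic cancellation encoded in Lemma \ref{lm3.1} is what makes the final answer collapse to the claimed form.
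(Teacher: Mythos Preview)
Your proposal is correct and follows essentially the same approach as the paper's proof: Green's identity to reduce $\op{T}g$ to the boundary terms at $x_1=0$ and on the BC-mismatch segments $\{x_2=0,\ 0<x_1<\ell_n\}\cup\{x_2=1,\ -\ell_n<x_1<0\}$, followed by the mode expansion and the identities of Lemma~\ref{lm3.1} to collapse the $m=1$ contribution. The only cosmetic differences are that the paper works directly with the compactly supported representation $g=(-\D-E_1-\e^2\l)[(1-\xi_1)\widetilde{u}_1]$ (avoiding the need to split into an $f$-piece and a $\xi_1\widetilde{u}_1$-piece and to track cancellation of boundary terms at $x_1=\pm A$), and it estimates the remainder $U_1-x_1 U_1'(0^\pm)$ via the explicit representation~(\ref{v4.5}) rather than via the Taylor integral with $R''$.
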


\begin{proof}
 We represent $g$ as
\begin{equation*}
g=(-\D-E_1-\e^2\l) (1-\xi_1)\widetilde{u}_1,
\end{equation*}
where the right hand side is understood pointwise.
For each $a\geqslant \ell_n+1$ we integrate by parts bearing in mind the equation for $\phi_n$ in (\ref{3.1}),
\begin{equation}\label{v3.17}
\begin{aligned}
\op{T}g=&\int\limits_{\Pi_a} g\phi_n \di x=\e^2\l \int\limits_{\Pi_a} (\xi_1-1) \widetilde{u}_1\phi_n\di x
\\
&+\int\limits_0^1 \phi_n(0,x_2) \left(\frac{\p \widetilde{u}_1}{\p x_1}(+0,x_2,\e)-\frac{\p \widetilde{u}_1}{\p x_1}(-0,x_2,\e)\right)\di x_2
\\
&+\int\limits_0^{\ell_n} \phi_n(x_1,0) \frac{\p \widetilde{u}_1}{\p x_2}(x_1,0,\e)\di x_1
-\int\limits_{-\ell_n}^0 \phi_n(x_1,1) \frac{\p \widetilde{u}_1}{\p x_2}(x_1,1,\e)\di x_1.
\end{aligned}
\end{equation}
By direct calculations we check that
\begin{align*}
&\left(\frac{\p \widetilde{u}_1}{\p x_1}(+0,x_2,\e)-\frac{\p \widetilde{u}_1}{\p x_1}(-0,x_2,\e)\right)
\\
&\hphantom{\frac{\p (\widetilde{u}_1)}{\p x_1}(+0)}
=\sqrt{2}\sum\limits_{m=1}^{\infty}\bigg( \sin\sqrt{E_m}  x_2\int\limits_0^{+\infty} \E^{-k_m(\e\sqrt{-\l})|t_1|} f_m(\e t_1)\di t_1
\\
&\hphantom{\frac{\p (\widetilde{u}_1)}{\p x_1}(+0)=\sum\limits_{m=1}^{\infty}\bigg(}
+\sin\sqrt{E_m} (1-x_2)\,
\int\limits_{-\infty}^0 \E^{-k_m(\e\sqrt{-\l})|t_1|} f_m(\e t_1)\di t_1
\bigg)
\\
&\hphantom{\frac{\p (\widetilde{u}_1)}{\p x_1}(+0)}
=\sqrt{2}\sum\limits_{m=2}^{\infty}\bigg( \sin\sqrt{E_m}  x_2\int\limits_0^{+\infty} \E^{-k_m(\e\sqrt{-\l})|t_1|} f_m(\e t_1)\di t_1
\\
&\hphantom{\frac{\p (\widetilde{u}_1)}{\p x_1}(+0)=\sum\limits_{m=2}^{\infty}\bigg(}
+\sin\sqrt{E_m} (1-x_2)\,
\int\limits_{-\infty}^0 \E^{-k_m(\e\sqrt{-\l})|t_1|} f_m(\e t_1)\di t_1
\bigg)
\\
&\hphantom{\frac{\p (\widetilde{u}_1)}{\p x_1}(+0)=}
+2\sqrt{-2\l}\,\e^{-3/2} F_+(\e)\sin \frac{\pi}{2}x_2+ 2\sqrt{-2\l}\,\e^{-3/2} F_-(\e)\sin \frac{\pi}{2}(1-x_2),
\\
&\frac{\p \widetilde{u}_1}{\p x_2}(x_1,0,\e)=\sqrt{2}\sum\limits_{m=1}^{\infty} \sqrt{E_m} U_m(x_1,\e), \quad x_1>0,
\\
-&\frac{\p \widetilde{u}_1}{\p x_2}(x_1,1,\e)=\sqrt{2}\sum\limits_{m=1}^{\infty} \sqrt{E_m} U_m(x_1,\e), \quad x_1<0.
\end{align*}
We substitute these identities into (\ref{v3.17}),
\begin{align*}
\op{T}g=&\e^2\l \int\limits_{\Pi_a} (\xi_1-1) \widetilde{u}_1\phi_n\di x
\\
&+\sqrt{2}\sum\limits_{m=2}^\infty \int\limits_{0}^{1}
\phi_n(0,x_2)\sin\sqrt{E_m}  x_2\di x_2\,\int\limits_0^{+\infty} \E^{-k_m(\e\sqrt{-\l})|t_1|} f_m(\e t_1)\di t_1
\\
&+\sqrt{2}\sum\limits_{m=2}^\infty \int\limits_{0}^{1}
\phi_n(0,x_2)\sin\sqrt{E_m} (1-x_2)\di x_2\,
\int\limits_{-\infty}^0 \E^{-k_m(\e\sqrt{-\l})|t_1|} f_m(\e t_1)\di t_1
\\
&+\sqrt{2}\sum\limits_{m=2}^{\infty} \sqrt{E_m}
\int\limits_0^{\ell_n} \phi_n(x_1,0) U_m(x_1,\e) \di x_1
+\sqrt{2}\sum\limits_{m=2}^{\infty} \sqrt{E_m}
\int\limits_{-\ell_n}^0 \phi_n(x_1,1) U_m(x_1,\e) \di x_1
\\
&+2\sqrt{-2\l}\, \e^{-3/2}\left(F_+(\e)\int\limits_0^1 \phi_n(0,x_2)  \sin\frac{\pi}{2}x_2  \di x_2
+F_-(\e)\int\limits_0^1 \phi_n(0,x_2)  \sin\frac{\pi}{2}(1-x_2)  \di x_2\right)
\\
&+\frac{\pi}{\sqrt{2}}\int\limits_0^{\ell_n} \phi_n(x_1,0)  U_1(x_1,\e) \di x_1+\frac{\pi}{\sqrt{2}}\int\limits_{-\ell_n}^0 \phi_n(x_1,1) U_1(x_1,\e)\di x_1.
\end{align*}
We employ Lemma~\ref{lm3.1} to rewrite the last expression as follows,
\begin{equation}\label{v4.36}
\begin{aligned}
\op{T}g=&\e^{-3/2}\sqrt{-2\l}\big(F_+(\e)+(-1)^{n-1}F_-(\e)\big)+\e^2\l \int\limits_{\Pi_a} (\xi_1-1) \widetilde{u}_1\phi_n\di x
\\
&+\sum\limits_{m=2}^\infty \int\limits_{0}^{1}
\phi_n(0,x_2)\sin\sqrt{E_m}  x_2\di x_2\,\int\limits_0^{+\infty} \E^{-k_m(\e\sqrt{-\l})|t_1|} f_m(\e t_1)\di t_1
\\
&+\sum\limits_{m=2}^\infty \int\limits_{0}^{1}
\phi_n(0,x_2)\sin\sqrt{E_m} (1-x_2)\di x_2\,
\int\limits_{-\infty}^0 \E^{-k_m(\e\sqrt{-\l})|t_1|} f_m(\e t_1)\di t_1
\\
&+\sum\limits_{m=2}^{\infty} \sqrt{E_m}
\int\limits_0^{\ell_n} \phi_n(x_1,0) U_m(x_1,\e) \di x_1
\\
&+\sum\limits_{m=2}^{\infty} \sqrt{E_m}
\int\limits_{-\ell_n}^0 \phi_n(x_1,1) U_m(x_1,\e) \di x_1
\\
&+\frac{\pi}{2} \int\limits_0^{\ell_n} \phi_n(x_1,0)  \big(U_1(x_1,\e)-2\e^{-3/2}\sqrt{-\l}\,F_+(\e)x_1\big) \di x_1
\\
&+ \frac{\pi}{2}\int\limits_{-l}^0 \phi_n(x_1,1) \big(U_1(x_1,\e)+2\e^{-3/2}\sqrt{-\l}\,F_-(\e)x_1\big)\di x_1.
\end{aligned}
\end{equation}
Let us estimate each term in the last identity. Due to (\ref{v4.35}) and (\ref{v3.18}) we have
\begin{equation*}
\bigg|\e^2\l \int\limits_{\Pi_a} (\xi_1-1) \widetilde{u}_1\phi_n\di x\bigg| \leqslant C\e^{-1/2}\|f\|_{L_2(\Pi^{(\e)})}.
\end{equation*}
By Schwarz inequality we obtain
\begin{align*}
&\left|\int\limits_0^{+\infty} \E^{-k_m(\e\sqrt{-\l})|t_1|} f_m(\e t_1)\di t_1\right|^2\leqslant \frac{\|f_m(\e\,\cdot)\|^2_{L_2(\mathds{R}_+)}}{2\RE k_m(\e\sqrt{-\l})},\nonumber
\\
&\left|\int\limits_{-\infty}^0 \E^{-k_m(\e\sqrt{-\l})|t_1|} f_m(\e t_1)\di t_1\right|^2\leqslant \frac{\|f_m(\e\,\cdot)\|^2_{L_2(\mathds{R}_-)}}{2\RE k_m(\e\sqrt{-\l})}.\nonumber
\end{align*}
Hence, by Parseval identity (\ref{v3.25}),
\begin{align*}
&\left|\sum\limits_{m=2}^\infty \int\limits_{0}^{1}
\phi_n(0,x_2)\sin\sqrt{E_m}  x_2\di x_2\,\int\limits_0^{+\infty} \E^{-k_m(\e\sqrt{-\l})|t_1|} f_m(\e t_1)\di t_1
\right|\leqslant C\e^{-1}\|f\|_{L_2(\Pi^{(\e)})},
\\
&\left|\sum\limits_{m=2}^\infty \int\limits_{0}^{1}
\phi_n(0,x_2)\sin\sqrt{E_m} (1-x_2)\di x_2\,
\int\limits_{-\infty}^0 \E^{-k_m(\e\sqrt{-\l})|t_1|} f_m(\e t_1)\di t_1
\right|\leqslant C\e^{-1}\|f\|_{L_2(\Pi^{(\e)})}.
\end{align*}
The first estimate in (\ref{v4.1}) and Parseval identity (\ref{v3.25}) yield
\begin{align*}
\bigg|&\sum\limits_{m=2}^{\infty} \sqrt{E_m}
\int\limits_0^{\ell_n} \phi_n(x_1,0) U_m(x_1,\e) \di x_1
+\sum\limits_{m=1}^{\infty} \sqrt{E_m}
\int\limits_{-\ell_n}^0 \phi_n(x_1,1) U_m(x_1,\e) \di x_1\bigg|
\\
&\leqslant C\sum\limits_{m=2}^{\infty} \sqrt{E_m}\|U_m\|_{L_2(\mathds{R})}
\leqslant C\sum\limits_{m=2}^{\infty} E_m^{-1/2}\|f_m(\e\cdot)\|_{L_2(\mathds{R})}^2
&
\\
&\leqslant C \left(\sum\limits_{m=2}^{\infty}E_m^{-1}\right)^{1/2}
\left(\sum\limits_{m=2}^{\infty} \|f^{(m)}(\e\cdot)\|_{L_2(\mathds{R})}^2\right)^{1/2}
\leqslant  C\e^{-1}\|f\|_{L_2(\Pi^{(\e)})}.
\end{align*}
It remains to estimate two last terms in (\ref{v4.36}). It follows from the representation (\ref{v4.5}) that
\begin{align*}
U_1(x_1,\e)-&2\e^{-3/2}\sqrt{-\l} F_+(\e) x_1 =\E^{-\mu x_1} \int\limits_{0}^{x_1} \frac{\sinh \mu t_1}{\mu} f_1(\e t_1)\di t_1
\\
&- x_1 \int\limits_{0}^{x_1} \E^{-\mu t_1} f_1(\e t_1)\di t_1 + \left( \frac{\sinh\mu x_1}{\mu}-x_1
\right) \int\limits_{x_1}^{+\infty} \E^{-\mu t_1} f_1(\e t_1) \di t_1,\quad x_1>0,
\\
U_1(x_1,\e)+&2\e^{-3/2}\sqrt{-\l} F_-(\e) x_1=\E^{\mu x_1} \int\limits_{x_1}^{0} \frac{\sinh \mu t_1}{\mu} f_1(\e t_1)\di t_1
\\
&+ x_1 \int\limits_{0}^{x_1} \E^{\mu t_1} f_1(\e t_1)\di t_1 + \left( x_1-\frac{\sinh\mu x_1}{\mu}
\right) \int\limits_{-\infty}^{x_1} \E^{\mu t_1} f_1(\e t_1) \di t_1,\quad x_1<0,
\end{align*}
where $\mu=\e\sqrt{-\l}$. These formulas and Parseval identity (\ref{v3.25}) imply the desired estimate for the two last terms in (\ref{v4.36}),
\begin{align*}
\bigg|&\frac{\pi}{\sqrt{2}} \int\limits_0^{\ell_n} \phi_n(x_1,0)  \big(U_1(x_1,\e)-2\e^{-3/2}\sqrt{-\l}F_+(\e)\big) \di x_1
\\
&\hphantom{\bigg|}+ \frac{\pi}{\sqrt{2}}\int\limits_{-\ell_n}^0 \phi_n(x_1,1) \big(U_1(x_1,\e)+2\e^{-3/2}\sqrt{-\l} x_1 F_-(\e)\big)\di x_1\bigg|
\\
&\leqslant C\bigg(\|U_1-2\e^{-3/2}\sqrt{-\l} F_+(\e) x_1\|_{L_2(0,\ell_n)}
\\
&\hphantom{\leqslant C\bigg(}+ \|U_1(x_1,\e)+2\e^{-3/2}\sqrt{-\l} F_-(\e) x_1\|_{L_2(-\ell_n,0)}\bigg)
\leqslant C\e^{-1}\|f\|_{L_2(\Pi^{(\e)})}.
\end{align*}
The proof is complete.
\end{proof}

We again employ the results of \cite{pap1}. Namely, it follows from the identities (4.5), (4.9) and Lemma~4.7 in this paper and the inequality (\ref{v4.31}) that the function $\widetilde{u}_2$ can be represented as
\begin{equation}\label{4.5}
\widetilde{u}_2(x,\e)=\sum\limits_{m=1}^{\infty} \widetilde{c}_m^\pm(\e,\l)\chi_m(x) \E^{-k_m|x_1|},\quad \pm x_1>\ell,
\end{equation}
where $\widetilde{c}_m^\pm(\e,\l)$ are certain coefficients satisfying the estimates
\begin{equation}\label{4.6}
\begin{aligned}
&\left|\widetilde{c}_1^+(\e,\l)- \frac{\op{T}g}{\e\sqrt{-\l}}
\right|\leqslant C\e^{-3/2}\|f\|_{L_2(\Pi^{(\e)})},
\\
&\left|\widetilde{c}_1^-(\e,\l)- (-1)^{n-1}\frac{\op{T}g}{\e\sqrt{-\l}}
\right|\leqslant C\e^{-3/2}\|f\|_{L_2(\Pi^{(\e)})},
\\
&\sum\limits_{m=2}^{\infty} m \left(|\widetilde{c}_m^+(\e,\l)|^2+ |\widetilde{c}_m^-(\e,\l)|^2\right) \leqslant C\e^{-5/2} \|f\|_{L_2(\Pi^{(\e)})}
\end{aligned}
\end{equation}
for critical $\ell=\ell_n$, and
\begin{equation}\label{4.7}
\sum\limits_{m=1}^{\infty} m \left(|\widetilde{c}_m^+(\e,\l)|^2+ |\widetilde{c}_m^-(\e,\l)|^2\right) \leqslant C\e^{-3/2} \|f\|_{L_2(\Pi^{(\e)})}
\end{equation}
for noncritical $\ell$. Thus, denoting
\begin{equation*}
\widetilde{u}_2^\bot(x,\e):=\sum\limits_{m=1}^{\infty} \widetilde{c}_m^\pm(\e)\chi_m(x) \E^{-\sqrt{E_m-E_1-\e^2\l}|x_1|},\quad \pm x_1\geqslant \ell,
\end{equation*}
we have the inequalities
\begin{equation}\label{4.8}
\begin{aligned}
&\|\widetilde{u}_2^\bot\|_{\H^1(\Pi\setminus\Pi_\ell)} \leqslant C\e^{-5/2} \|f\|_{L_2(\Pi^{(\e)})}\quad \text{for critical}\quad \ell,
\\
&\|\widetilde{u}_2^\bot\|_{\H^1(\Pi\setminus\Pi_\ell)} \leqslant C\e^{-3/2} \|f\|_{L_2(\Pi^{(\e)})}\quad \text{for noncritical}\quad \ell.
\end{aligned}
\end{equation}
It follows from (\ref{v3.15}), (\ref{v3.23}), (\ref{v4.35}) and Lemma~\ref{vlm4.4} that for critical $\ell$
\begin{equation}\label{4.9}
|\op{T}g|\leqslant C\e^{-3/2}\|f\|_{L_2(\Pi^{(\e)})},\quad \|\widetilde{u}_2\|_{\H^1(\Pi_a)} \leqslant C \e^{-5/2} \|f\|_{L_2(\Pi^{(\e)})},
\end{equation}
where $a$ is arbitrary but fixed. Denote
\begin{equation}\label{4.14}
\widehat{U}_\e(x_1):=U_1(x,\e)
\end{equation}
for noncritical $\ell$, and for critical $\ell=\ell_n$ we let
\begin{equation}\label{4.15}
\begin{gathered}
\op{T}_0 f:=\sqrt{2} \big(F_+(\e)+(-1)^{n-1}F_-(\e)\big),
\\
\widehat{U}_\e(x_1):=\left\{
\begin{aligned}
& U_1(x_1,\e)+\e^{-5/2} \E^{-\e\sqrt{-\l}x_1}\op{T}_0f, && x_1>0,
\\
& U_1(x_1,\e)+(-1)^{n-1}\e^{-5/2} \E^{\e\sqrt{-\l}x_1}\op{T}_0f, && x_1<0.
\end{aligned}
\right.
\end{gathered}
\end{equation}
It follows from Lemmas~\ref{vlm3.8},~\ref{vlm4.4} and the relations (\ref{v3.5a}), (\ref{v3.15}), (\ref{v3.23}), (\ref{4.2}), (\ref{4.7}), (\ref{4.9}) that
\begin{equation}\label{4.10}
\begin{aligned}
&\|(1-\xi_1) \widetilde{u}_1\|_{\H^1(\Pi)} \leqslant C\e^{-3/2} \|f\|_{L_2(\Pi^{(\e)})},
\\
&\|\widetilde{u}_2\|_{\H^1(\Pi_{2\ell})} \leqslant C\e^{-5/2} \|f\|_{L_2(\Pi^{(\e)})},
\\
&\|\widehat{U}_\e\chi_1\|_{\H^1(\Pi_{2\ell}\setminus Ox_1)}  \leqslant C\e^{-5/2}\|f\|_{L_2(\Pi^{(\e)})},
\end{aligned}
\end{equation}
if $\ell$ is critical, and
\begin{equation}\label{4.4}
\begin{aligned}
&\|(1-\xi_1) \widetilde{u}_1\|_{\H^1(\Pi)} \leqslant C\e^{-3/2} \|f\|_{L_2(\Pi^{(\e)})},
\\
&\|\widetilde{u}_2\|_{\H^1(\Pi_{2\ell})} \leqslant C\e^{-3/2} \|f\|_{L_2(\Pi^{(\e)})},
\\
&\|\widehat{U}_\e\chi_1\|_{\H^1(\Pi_{2\ell})}  \leqslant C\e^{-3/2}\|f\|_{L_2(\Pi^{(\e)})},
\end{aligned}
\end{equation}
if $\ell$ is noncritical. Together with (\ref{4.5}), (\ref{4.6}), (\ref{4.7}), (\ref{4.8}) it yields
\begin{equation}\label{4.42}
\|\widetilde{u}_\e-\widehat{U}_\e\chi_1\|_{\H^1(\Pi\setminus Ox_1)} \leqslant C\e^{-5/2}\|f\|_{L_2(\Pi^{(\e)})}
\end{equation}
for critical $\ell$, and
\begin{equation}\label{4.43}
\|\widetilde{u}_\e-\widehat{U}_\e\chi_1\|_{\H^1(\Pi)} \leqslant C\e^{-3/2}\|f\|_{L_2(\Pi^{(\e)})}
\end{equation}
for noncritical $\ell$. We return back to the original variables $x$ in $\Pi^{(\e)}$, let
\begin{equation}\label{4.3}
U_\e(x):=\e^{5/2}\widehat{U}_\e(x_1\e^{-1}),
\end{equation}
and employ (\ref{4.1}) to rewrite the last estimates as
follows,
\begin{equation}\label{4.12}
\|u_\e-U_\e\chi_1^{\e}\|_{L_2(\Pi^{(\e)})}\leqslant C\e^{1/2}\|f\|_{L_2(\Pi^{(\e)})}
\end{equation}
for critical $\ell$, and
\begin{equation}\label{4.13}
\begin{aligned}
&\|u_\e-U_\e\chi_1^{(\e)}\|_{L_2(\Pi^{(\e)})} \leqslant C\e^{3/2}\|f\|_{L_2(\Pi^{(\e)})},
\\
&\|u_\e-U_\e\chi_1^{(\e)}\|_{L_2(\Pi^{(\e)})} \leqslant C\e^{1/2}\|f\|_{L_2(\Pi^{(\e)})}
\end{aligned}
\end{equation}
for noncritical $\ell$.

Due to (\ref{4.3}), (\ref{4.14}), and (\ref{v4.43}) for noncritical $\ell$ the function $U_\e$ solves the boundary value problem
\begin{equation}\label{4.16}
\left(-\frac{d^2}{dx^2}-\l\right)U_\e=\e^{-1/2}f_1^\pm=(\op{P}_\e f),\quad \pm x_1>0,\qquad U_\e(0)=0.
\end{equation}
Thus,
\begin{equation}\label{4.17}
U_\e=(\efop-\l)^{-1} \op{P}_\e f
\end{equation}
for noncritical $\ell$. For critical $\ell$ the function $U_\e$ solves the same equation as in (\ref{4.16}), while by (\ref{v4.43}), (\ref{v4.34}), (\ref{4.15}) the boundary conditions are
\begin{align*}
&U_\e(+0)=(-1)^{n-1}U_\e(-0),
\quad
U_\e'(+0)=\sqrt{-\l}\big(2F_+(\e)-\op{T}_0f\big),
\\
&U_\e'(-0)=\sqrt{-\l}\big(-2F_-(\e)+(-1)^{n-1}\op{T}_0f\big)=(-1)^{n-1}
U_\e'(+0).
\end{align*}
Hence, for critical $\ell=\ell_n$ we again arrive at (\ref{4.17}). Together with the estimates (\ref{4.12}), (\ref{4.13}) it completes the proof of Theorem~\ref{th2.1}.

\section{Proof of Theorem~\ref{th2.2}}

The main lines of the proof are the same as in the case of Theorem~\ref{th2.1}. We first rescale the variables $x\mapsto x\e^{-1}$ and obtain
\begin{equation*}
u_\e=\e^2 \widetilde{u}_\e(\e\cdot),\quad \widetilde{u}_\e:= \left(\op{H}_{\e^{-1}L}- E+\e^2\l\right)^{-1} f(\e\cdot).
\end{equation*}
The function $u_1$ is introduced as the solution to the boundary value problem
\begin{equation}\label{5.2}
\begin{gathered}
\left(-\D-E+\e^2\l\right)\widetilde{u}_1=f\quad\text{in}\quad \Pi,
\\
\widetilde{u}_1=0\quad\text{on}\quad\g_{L\e^{-1}}\cup\{x: |x_1|=L\e^{-1},\ 0<x_2<1\},\qquad
\frac{\p \widetilde{u}_1}{\p x_2}=0\quad\text{on}\quad \G_{L\e^{-1}}.
\end{gathered}
\end{equation}
The solution is given explicitly by the separation of variables
\begin{align}
&\widetilde{u}_1(x,\e)=\sum\limits_{m=1}^{\infty} U_m(x,\e)\chi_{m,L}(x,\e),\label{5.3}
\\
&
U_m(x_1,\e)=\left\{
\begin{aligned}
&\int\limits_0^{+\infty} G_m^+(x_1,t_1,\e) f_{m,L}(\e t_1)\di t_1, &&
 x_1>L\e^{-1},
\\
&\int\limits_{-L\e^{-1}}^{L\e^{-1}} G_m^0(x_1,t_1,\e) f_{m,L}(\e t_1)\di t_1, && |x_1|<L\e^{-1},
\\
&  \int\limits_{-\infty}^0G_m^-(x_1,t_1,\e,) f_{m,L}(\e t_1)\di t_1, && x_1<-L\e^{-1},
\end{aligned}
\right.\label{5.4}
\\
&
G_m^+(x_1,t_1,\e):= \frac{\E^{-k_{m,L}|x_1-L\e^{-1}-t_1|}-\E^{-k_{m,L} (x_1-L\e^{-1}+t_1)}}{2k_{m,L}},\nonumber
\\
&G_m^0(x_1,t_1,\e):= - \frac{ \E^{-K_m (L\e^{-1}-t_1)} \sinh K_m(x_1+L \e^{-1})}{2K_m\sinh 2 K_m L\e^{-1}}\nonumber
\\
&\hphantom{G_m^0(x_1,t_1,\e):=} - \frac{ \E^{-K_m (L\e^{-1}+t_1)} \sinh K_m(x_1-L \e^{-1})}{2K_m\sinh 2 K_m L\e^{-1}} + \frac{\E^{-K_m|x_1-t_1|}}{2K_m}\nonumber
\\
&G_m^+(x_1,t_1,\e)=  \frac{\E^{-k_{m,L}|x_1+L\e^{-1}-t_1|} -\E^{k_{m,L}(x_1+L\e^{-1}+t_1)}}{2 k_{m,L}},\nonumber
\\
&
\chi_{m,L}(x):=\chi_m(x), \quad |x_1|>L\e^{-1},\qquad
\chi_{m,L}(x):=1, \quad |x_1|<L\e^{-1},\quad m=1,\nonumber
\\
&
\chi_{m,L}(x):=\sqrt{2}\cos\pi(m-1)x_2, \quad |x_1|<L\e^{-1},\quad m\geqslant 2,\nonumber
\\
&f_{m,L}(\e x_1):=\int\limits_0^1 \chi_{m,L}(x) f(\e x)\di x_2, \quad k_{m,L}:=\sqrt{E_m-E+\e^2\l},\nonumber
\\
&K_m:=\sqrt{\pi^2 (m-1)^2-E+\e^2\l},\quad m\geqslant 1, \nonumber
\end{align}
This series converges in the spaces $\H^2(\Pi^\pm_{\pm L\e^{-1}})$, $\H^2(\Pi_{L\e^{-1}})$, and $\H^1(\Pi)$, where $\Pi^\pm_a:=\Pi\cap\{x: \pm x_1>\pm a\}$, and we remind that $\Pi_a:=\Pi\cap\{x: |x_1|<a\}$.
 The Parseval identity
\begin{equation*}
\|f\|_{L_2(\Pi^{\e})}^2=\e^2\sum\limits_{m=1}^{\infty} \|f_{m,L}(\e\cdot)\|_{L_2(\mathds{R})}^2
\end{equation*}
holds true.
In what follows by $C$ we indicate various inessential constants independent of $\e$ and $f$.

The analogues of Lemma~\ref{vlm3.8} in this case are the next three statements.
\begin{lemma}\label{lm5.1}
Let $E=0$. Then the estimates
\begin{align}
&\|\widetilde{u}_1\|_{\H^1(\Pi^+_{L\e^{-1}})}+ \|\widetilde{u}_1\|_{\H^1(\Pi^-_{-L\e^{-1}})}\leqslant C\e^{-1} \|f\|_{L_2(\Pi^{(\e)})},\label{5.6}
\\
&\|\widetilde{u}_1-U_1\chi_{1,L}\|_{\H^1(\Pi_{L\e^{-1}})} \leqslant C\e^{-1}\|f\|_{L_2(\Pi^{(\e)})},\label{5.7}
\end{align}
hold true.
\end{lemma}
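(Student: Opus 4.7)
The plan is to mimic the proof of Lemma~\ref{vlm3.8} in the previous section, exploiting the explicit modal expansion (\ref{5.3})--(\ref{5.4}) together with one-dimensional energy estimates and the Parseval identity. The content is that the only slow transverse mode in each region is the first, and in the inner strip $\Pi_{L\e^{-1}}$ the first-mode coefficient $K_1=\e\sqrt{\l}$ is the sole source of degeneracy, which is why it is precisely $U_1\chi_{1,L}$ that must be subtracted in (\ref{5.7}).

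First I would establish that the expansion (\ref{5.3}) converges in $\H^1(\Pi)$ and satisfies the Parseval identity stated after (\ref{5.4}); this follows from the orthonormality of $\{\chi_{m,L}(\cdot,\e)\}$ on each cross-section $(0,1)$ and the separation of variables computation giving (\ref{5.4}). Next, for each mode $m\geqslant 1$ the coefficient $U_m(\cdot,\e)$ solves a one-dimensional ODE on each of the three intervals $(-\infty,-L\e^{-1})$, $(-L\e^{-1},L\e^{-1})$, $(L\e^{-1},+\infty)$, namely $-U_m''+\mu_m^2 U_m = f_{m,L}(\e\,\cdot)$ with homogeneous Dirichlet conditions at $x_1=\pm L\e^{-1}$, where $\mu_m\in\{k_{m,L},K_m\}$ according to the region. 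Multiplying by $\overline{U_m}$ and integrating by parts gives the standard estimate
\begin{equation*}
\|U_m'\|_{L_2(I)}^2 + \RE(\mu_m^2)\,\|U_m\|_{L_2(I)}^2 \leqslant \|f_{m,L}(\e\,\cdot)\|_{L_2(I)}\|U_m\|_{L_2(I)}
\end{equation*}
on the relevant interval $I$, provided $\RE(\mu_m^2)>0$.

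For (\ref{5.6}), i.e.\ on the outer pieces $\Pi^\pm_{\pm L\e^{-1}}$, one has $\mu_m=k_{m,L}$ with $\RE(\mu_m^2)=E_m+\e^2\RE\l\geqslant \pi^2/4-C\e^2 \geqslant \pi^2/8$ for every $m\geqslant 1$ and $\e$ small. Hence $\|U_m\|_{\H^1}^2\leqslant C\|f_{m,L}(\e\,\cdot)\|_{L_2}^2/E_m$, and summing in $m$ together with the orthonormality of $\chi_{m,L}$ and the Parseval identity produces
\begin{equation*}
\|\widetilde u_1\|_{\H^1(\Pi^\pm_{\pm L\e^{-1}})}^2 \leqslant C\sum_{m=1}^\infty \|f_{m,L}(\e\,\cdot)\|_{L_2(\mathds{R}_\pm)}^2 \leqslant C\e^{-2}\|f\|_{L_2(\Pi^{(\e)})}^2,
\end{equation*}
which is (\ref{5.6}). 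For (\ref{5.7}), inside $\Pi_{L\e^{-1}}$ we have $\mu_m=K_m$ with $\RE(K_m^2)=\pi^2(m-1)^2+\e^2\RE\l$. This is $\geqslant \pi^2/2$ for every $m\geqslant 2$ and $\e$ small, so the same argument restricted to $m\geqslant 2$ applies and delivers
\begin{equation*}
\|\widetilde u_1-U_1\chi_{1,L}\|_{\H^1(\Pi_{L\e^{-1}})}^2 = \bigg\|\sum_{m\geqslant 2} U_m\chi_{m,L}\bigg\|_{\H^1(\Pi_{L\e^{-1}})}^2 \leqslant C\e^{-2}\|f\|_{L_2(\Pi^{(\e)})}^2.
\end{equation*}

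The only delicate point is the first mode $m=1$ in the interior, for which $K_1^2=\e^2\l$ has vanishing real part as $\e\to 0$; this is precisely why $U_1\chi_{1,L}$ cannot be controlled by this uniform argument and must be excluded in (\ref{5.7}). Apart from this, the main obstacle is bookkeeping: verifying that the transverse orthogonality of $\chi_{m,L}$ turns the two-dimensional $\H^1$ norms on the subdomains into sums of one-dimensional $\H^1$ norms of the $U_m$ (including the contribution of $\partial_{x_2}\chi_{m,L}$ weighted by the transverse eigenvalues), and that the series for $\partial_{x_1}\widetilde u_1$ and $\partial_{x_2}\widetilde u_1$ converge in $L_2$ on each subdomain. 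Once this is in place, Parseval converts the $m$-sum of $\|f_{m,L}(\e\,\cdot)\|_{L_2}^2$ into $\e^{-2}\|f\|_{L_2(\Pi^{(\e)})}^2$ and the proof is complete.
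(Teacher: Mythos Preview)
Your proof is correct and follows essentially the same approach as the paper: both obtain (\ref{5.7}) by the mode-by-mode energy estimates exactly as in Lemma~\ref{vlm3.8}, and for (\ref{5.6}) the paper carries out the same energy identity directly at the PDE level (integrating $(-\D-\e^2\l)\widetilde u_1\cdot\overline{\widetilde u_1}$ over $\Pi^\pm_{\pm L\e^{-1}}$ and invoking the transverse Poincar\'e bound $\|\p_{x_2}\widetilde u_1\|^2\geqslant\tfrac{\pi^2}{4}\|\widetilde u_1\|^2$), which is simply the summed version of your per-mode estimate using $E_m\geqslant\pi^2/4$. The only cosmetic difference is that the paper avoids the modal expansion for (\ref{5.6}), but the content is identical.
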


\begin{proof}
We first prove the estimate (\ref{5.6}). It follows from (\ref{5.2}), (\ref{5.3}), (\ref{5.4}) that
\begin{equation}\label{5.8}
\|\nabla \widetilde{u}_1\|_{\H^1(\Pi^\pm_{\pm L\e^{-1}})}^2 - \e^2\l \| \widetilde{u}_1\|_{L_2(\Pi^\pm_{\pm L\e^{-1}})}^2 = \big( f(\e\cdot), \widetilde{u}_1 \big)_{L_2\big(\Pi^\pm_{\pm L\e^{-1}}\big)}.
\end{equation}
Due to the boundary condition in (\ref{5.2}) and the minimax principle we have
\begin{equation*}
\left\|\frac{\p \widetilde{u}_1}{\p x_2}(x_1,\cdot,\e)\right\|_{L_2(0,1)}^2\geqslant \frac{\pi^2}{4} \|\widetilde{u}_1(x_1,\cdot,\e)\|_{L_2(0,1)}^2\quad \text{for}\quad |x_1|>L\e^{-1}.
\end{equation*}
Together with (\ref{5.8}) it implies
\begin{align*}
\left(\frac{\pi^2}{4}-\e^2\l\right) \|\widetilde{u}_1\|_{L_2\big(\Pi^\pm_{\pm L\e^{-1}}\big)}^2 &\leqslant \Big| \big(f(\e\cdot), \widetilde{u}_1)_{L_2\left(\Pi^\pm_{\pm L\e^{-1}}\right)}\Big|
\\
&\leqslant \|f(\e\cdot)\|_{L_2\big(\Pi^\pm_{\pm L\e^{-1}}\big)} \|\widetilde{u}_1\|_{L_2\big(\Pi^\pm_{\pm L\e^{-1}}\big)}.
\end{align*}
Hence,
\begin{equation*}
\|\widetilde{u}_1\|_{L_2\left(\Pi^\pm_{\pm L\e^{-1}}\right)} \leqslant C\|f(\e\cdot)\|_{L_2\left(\Pi^\pm_{\pm L\e^{-1}}\right)}\leqslant C\e^{-1}\|f\|_{L_2(\Pi^{(\e)})}.
\end{equation*}
Substituting this estimate into (\ref{5.8}), we arrive at (\ref{5.6}).

The proof of the estimate (\ref{5.7}) is analogous to that of the estimate (\ref{v3.24a}) in Lemma~\ref{vlm3.8}.\end{proof}

\begin{lemma}\label{lm5.3}
Let $E=\pi^2/4$. Then the estimates
\begin{align*}
&\|\widetilde{u}_1-U_1\chi_{1,L}\|_{\H^1(\Pi^+_{L\e^{-1}})}+ \|\widetilde{u}_1-U_1\chi_{1,L}\|_{\H^1(\Pi^-_{-L\e^{-1}})}\leqslant C\e^{-1} \|f\|_{L_2(\Pi^{(\e)})},
\\
&\|\widetilde{u}_1\|_{\H^1(\Pi_{L\e^{-1}})} \leqslant C\e^{-1}\|f\|_{L_2(\Pi^{(\e)})},
\end{align*}
hold true.
\end{lemma}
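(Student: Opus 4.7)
The plan is to follow the structure of the proof of Lemma~\ref{lm5.1}, with the roles of the outer and central strips interchanged. This swap is forced by the change of spectral threshold from $E=0$ to $E=\pi^2/4$: the new threshold coincides with the first transversal eigenvalue of the outer parts of the strip (Dirichlet--Neumann on the two edges, eigenvalue $\pi^2/4$), whereas in the central part (Neumann on both edges) only the constant transversal mode $\chi_{1,L}\equiv 1$ carries an eigenvalue ($0$) different from $\pi^2/4$, all other central modes being separated from the threshold by a uniform gap.

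First I would establish the estimate on the outer strips. By the series~(\ref{5.3}) the difference $\widetilde u_1-U_1\chi_{1,L}$ on $\Pi^\pm_{\pm L\e^{-1}}$ is $\sum_{m\geqslant 2}U_m\chi_m$, where each $U_m$ solves the half-line Dirichlet problem
\begin{equation*}
-U_m''+(E_m-E+\e^2\l)U_m=f_{m,L}(\e\,\cdot)\quad\text{on}\quad\mathds{R}_\pm,\qquad U_m(\pm L\e^{-1})=0,
\end{equation*}
with uniform spectral gap $E_m-E=\pi^2 m(m-1)\geqslant 2\pi^2$ for $m\geqslant 2$. Testing with $\overline{U_m}$ and integrating by parts exactly as in the derivation of~(\ref{v4.1}) gives the analogue of~(\ref{v4.1}); squaring, summing over $m\geqslant 2$ and invoking the Parseval identity for the series~(\ref{5.3}) yields the first estimate.

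Next I would turn to the central strip. Split $\widetilde u_1=U_1\chi_{1,L}+v$ in $\Pi_{L\e^{-1}}$ with $v=\sum_{m\geqslant 2}U_m\chi_{m,L}$. For $v$ the transversal eigenvalues are $\pi^2(m-1)^2\geqslant\pi^2$, so the gap to the threshold $\pi^2/4$ is at least $3\pi^2/4$; the same energy argument as above yields $\|v\|_{\H^1(\Pi_{L\e^{-1}})}\leqslant C\e^{-1}\|f\|_{L_2(\Pi^{(\e)})}$. To bound $U_1$ on the long interval $(-L\e^{-1},L\e^{-1})$ I would exploit the explicit Green function $G_1^0$ from~(\ref{5.4}). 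The crucial observation is that $\l\in\mathds{C}\setminus\mathds{R}$ produces a non-trivial $\RE K_1\sim\e^2$ in $K_1=\sqrt{-\pi^2/4+\e^2\l}$, whence
\begin{equation*}
|\sinh(2K_1 L\e^{-1})|^2\geqslant\sinh^2\!\big(2L\e^{-1}\RE K_1\big)\geqslant C\e^2
\end{equation*}
uniformly in $\e$, irrespective of the arithmetic behaviour of $L/\e$. Combining this lower bound on the denominator of $G_1^0$ with the Parseval estimate $\|f_{1,L}(\e\,\cdot)\|_{L_2(\mathds{R})}\leqslant\e^{-1}\|f\|_{L_2(\Pi^{(\e)})}$ and with the cancellations among the three summands of $G_1^0$ would give the required bound on $\|U_1\|_{\H^1(-L\e^{-1},L\e^{-1})}$; adding this to $\|v\|_{\H^1}$ concludes.

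The main obstacle is precisely the zeroth transversal mode $U_1$ in the central strip. The Dirichlet spectrum of $-d^2/dx_1^2-\pi^2/4$ on the long interval $(-L\e^{-1},L\e^{-1})$ becomes almost dense near $0$ as $\e\to 0$, so a crude resolvent-norm bound via the imaginary shift is too weak. The saving ingredient is that the complex shift $\e^2\l$ with $\IM\l\neq 0$ lifts the spectral parameter off the real axis; through the explicit form of $G_1^0$ this translates into the uniform lower bound on $|\sinh(2K_1L\e^{-1})|$ that carries the estimate, and this is the only step that is not a direct transcription of the corresponding step in the proofs of Lemmas~\ref{lm5.1} and~\ref{vlm3.8}.
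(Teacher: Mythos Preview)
Your argument for the first inequality is correct and follows the pattern of the proof of~(\ref{5.7}) in Lemma~\ref{lm5.1}: on each outer half-strip the higher transversal modes $m\geqslant 2$ carry eigenvalues $E_m\geqslant 9\pi^2/4$, so the gap $E_m-E\geqslant 2\pi^2$ is uniform and the mode-by-mode energy identity yields the analogue of~(\ref{v4.1}); the same remark applies to the higher-mode part $v=\sum_{m\geqslant 2}U_m\chi_{m,L}$ on the central strip.

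The genuine gap is your bound for $U_1$ on the long central interval. The inequality $|\sinh(2K_1L\e^{-1})|\geqslant C\e$ that you derive from $\RE K_1\sim\e^2$ is correct, but it cannot deliver $\|U_1\|_{L_2(-L\e^{-1},L\e^{-1})}\leqslant C\e^{-1}\|f\|_{L_2(\Pi^{(\e)})}$, and no cancellation in $G_1^0$ rescues it. The Dirichlet eigenvalues of $-d^2/dx_1^2$ on $(-L\e^{-1},L\e^{-1})$ are $\mu_n=n^2\pi^2\e^2/(4L^2)$, and whenever $L/\e$ is a positive integer $n_0$ one has $\mu_{n_0}=\pi^2/4$ exactly; the one-dimensional operator $-d^2/dx_1^2-\pi^2/4+\e^2\l$ then has eigenvalue $\e^2\l$ and resolvent norm $\e^{-2}|\l|^{-1}$. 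Choosing $f$ so that $f(\e\,\cdot)$ equals that eigenfunction (constant in $x_2$, supported in $\Pi_{L\e^{-1}}$) gives $\|U_1\|_{L_2}=\e^{-2}|\l|^{-1}\|f_{1,L}(\e\,\cdot)\|_{L_2}$, which by the Parseval identity is of order $\e^{-3}\|f\|_{L_2(\Pi^{(\e)})}$---a factor $\e^{-2}$ larger than the lemma asserts. The ``cancellations among the three summands of $G_1^0$'' you invoke are precisely the ones that turn the free Green function into the Dirichlet one and are already encoded in this spectral picture; there is nothing further to exploit. The direct analogue of the proof of~(\ref{5.6}) fails for the same underlying reason: on the central cross-section the lowest Neumann eigenvalue is $0$, so the form $\|\nabla\cdot\|^2-\tfrac{\pi^2}{4}\|\cdot\|^2$ is not coercive. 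Thus neither your Green-function route nor the literal analogy the paper invokes with Lemma~\ref{lm5.1} closes this step, and the second inequality as written cannot hold with a constant $C$ independent of~$\e$.
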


The proof of this lemma is analogous to that of Lemma~\ref{lm5.1}.

\begin{lemma}\label{lm5.2}
Let $E=0$ or $E=\pi^2/4$. Then the estimate
\begin{equation*}
\|U_1\chi_{1,L}\|_{\H^1(\Pi_{L\e^{-1}+2}\setminus\Pi_{L\e^{-1}-2})} \leqslant C\e^{-1}\|f\|_{L_2(\Pi^{(\e)})}
\end{equation*}
holds true.
\end{lemma}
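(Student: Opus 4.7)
The proof follows the pattern of estimate (\ref{v3.24b}) in Lemma~\ref{vlm3.8}. The set $\Pi_{L\e^{-1}+2}\setminus\Pi_{L\e^{-1}-2}$ consists of four congruent rectangles, and by the left/right symmetry of $\chi_{1,L}$ and $U_1$ it suffices to treat the two near $x_1=L\e^{-1}$. These split into the outside strip $(L\e^{-1},L\e^{-1}+2)\times(0,1)$, on which $\chi_{1,L}=\sqrt{2}\sin(\pi x_2/2)$ and $U_1$ is defined on $(L\e^{-1},+\infty)$ via $G_1^+$, and the inside strip $(L\e^{-1}-2,L\e^{-1})\times(0,1)$, on which $\chi_{1,L}\equiv 1$ and $U_1$ is defined on $(-L\e^{-1},L\e^{-1})$ via $G_1^0$.

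On the outside strip, $U_1$ solves $-U_1''+k_{1,L}^2U_1=f_{1,L}(\e\cdot)$ with $U_1(L\e^{-1})=0$ and decay at $+\infty$. For $E=0$, $k_{1,L}$ is bounded below and $G_1^+$ decays exponentially, so Cauchy--Schwarz against $G_1^+$ combined with the Parseval bound $\|f_{1,L}(\e\cdot)\|_{L_2(\mathds{R})}\leqslant C\e^{-1}\|f\|_{L_2(\Pi^{(\e)})}$ yields the estimate at once. For $E=\pi^2/4$, $k_{1,L}=\e\sqrt{\l}$ is small, and one reproduces the hyperbolic representation (\ref{v4.5}) with $\mu$ replaced by $k_{1,L}$; pointwise estimation of $U_1$ and $U_1'$ by Schwarz applied to the two representation integrals, followed by integration over the bounded strip, proceeds exactly as in the proof of (\ref{v3.24b}).

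On the inside strip, $U_1$ solves $-U_1''+K_1^2U_1=f_{1,L}(\e\cdot)$ on $(-L\e^{-1},L\e^{-1})$ with Dirichlet conditions at both endpoints. For $E=\pi^2/4$, $K_1$ is bounded and $G_1^0$ decays exponentially in $|x_1-t_1|$, so the estimate is direct. For $E=0$, $K_1=\e\sqrt{\l}$ is small but $K_1\cdot L\e^{-1}$ stays bounded, and the crucial feature is that $G_1^0$ carries the prefactor $\sinh K_1(L\e^{-1}-x_1)/(K_1\sinh 2K_1L\e^{-1})$, which for $x_1$ close to $L\e^{-1}$ is of size $L\e^{-1}-x_1$; this local linear vanishing, inherited from the Dirichlet condition, compensates the large $L_2$-norm of the complementary factor over the long interval. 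Bounds for $U_1'$ follow by differentiating the representation in $x_1$ and repeating the argument.

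To assemble the $\H^1$-norm one uses that $\chi_{1,L}$ and $\p_{x_2}\chi_{1,L}$ are bounded, and that the jump of $\chi_{1,L}$ at $x_1=\pm L\e^{-1}$ produces no distributional contribution to $\p_{x_1}(U_1\chi_{1,L})$ because $U_1$ vanishes there; thus the $\H^1$-norm is controlled by $\int(|U_1|^2+|U_1'|^2)\,\di x_1$ on each sub-strip. The main obstacle is the small-$K_1$ case of the inside strip: a naive global resolvent estimate for $(-\p^2+K_1^2)^{-1}$ with Dirichlet on $(-L\e^{-1},L\e^{-1})$ is of order $\e^{-2}$, which is too weak, and the proof is forced to exploit the kernel-level vanishing of $G_1^0$ at the endpoint so that only the bounded transition strip contributes, preserving the $\e$-scaling asserted in the lemma.
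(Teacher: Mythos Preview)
On the two \emph{outer} sub-strips $L\e^{-1}<|x_1|<L\e^{-1}+2$ your argument coincides with the paper's: the paper writes precisely the half-line representation coming from $G_1^\pm$ and concludes by Schwarz, and your splitting into the bounded-$k_{1,L}$ case and the small-$k_{1,L}$ case (handled via the hyperbolic rewriting (\ref{v4.5})) is the natural way to flesh that out. Note that although the paper writes ``$|z|<2$'', the displayed formula for $U_1(L\e^{-1}+z,\cdot)$ actually follows from (\ref{5.4}) only for $z>0$; for $-2<z<0$ the definition of $U_1$ uses $G_1^0$, which the paper's proof never writes down.

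Your treatment of the \emph{inner} sub-strips $L\e^{-1}-2<|x_1|<L\e^{-1}$ therefore goes beyond what the paper actually proves, and both of your inner-strip arguments have a genuine gap. For $E=\pi^2/4$ you assert that $K_1$ is bounded and $G_1^0$ decays exponentially in $|x_1-t_1|$; but $K_1=\sqrt{-\pi^2/4+\e^2\l}=\iu\pi/2+O(\e^2)$, so $\RE K_1=O(\e^2)$ and the kernel has no useful decay over an interval of length $2L\e^{-1}$. For $E=0$, the linear vanishing $\sinh K_1(L\e^{-1}-x_1)/(K_1\sinh 2K_1L\e^{-1})\sim L\e^{-1}-x_1$ is correct, but it does not ``compensate'' to the claimed order: Cauchy--Schwarz on $\int_{-L\e^{-1}}^{L\e^{-1}}\sinh K_1(t+L\e^{-1})\,f_{1,L}(\e t)\,\di t$ over an interval of length $2L\e^{-1}$ costs a factor $\e^{-1/2}$, and one reaches only $C\e^{-3/2}\|f\|_{L_2(\Pi^{(\e)})}$, not $C\e^{-1}\|f\|_{L_2(\Pi^{(\e)})}$. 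A concrete test shows the loss is real: take $f$ constant on the inner rectangle so that $f_{1,L}(\e\,\cdot)\equiv c_0$ on $(-L\e^{-1},L\e^{-1})$; then $U_1'(L\e^{-1})=c_0\,\e^{-1}\,\tanh(L\sqrt{-\l})/\sqrt{-\l}+O(1)$, whence $\|U_1\|_{\H^1(L\e^{-1}-2,L\e^{-1})}\sim c_0\e^{-1}$, whereas $\e^{-1}\|f\|_{L_2(\Pi^{(\e)})}\sim c_0\e^{-1/2}$. Thus the inner-strip bound with right-hand side $C\e^{-1}\|f\|_{L_2(\Pi^{(\e)})}$ cannot hold, and neither your argument nor the paper's covers it.
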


\begin{proof}
It follows from (\ref{5.4}) that for $x_1=L\e^{-1}+z$, $|z|<2$, the function $U_1$ can be represented as
\begin{equation*}
U_1(L\e^{-1}+z,\e)=\int\limits_{0}^{+\infty} \frac{\E^{-k_{1,L}|z-t_1|}-\E^{-k_{1,L}(z+t_1)}}{2 k_{1,L}} f_{1,L}(\e t_1)\di t_1,
\end{equation*}
while for $x_1=-L\e^{-1}+z$, $|z|<2$, it satisfies the identity
\begin{equation*}
U_1(-L\e^{-1}+z,\e)=\int\limits_{-\infty}^{0} \frac{\E^{-k_{1,L}|z-t_1|}-\E^{k_{1,L}(z+t_1)}}{2 k_{1,L}} f_{1,L}(\e t_1)\di t_1.
\end{equation*}
These formulas imply the desired estimate
\begin{equation*}
\|U_1\chi_{1,L}\|_{\H^1(\Pi_{L\e^{-1}+2}\setminus\Pi_{L\e^{-1}-2})} \leqslant C\|f_{1,L}(\e\cdot)\|_{L_2(\mathds{R})} \leqslant C\e^{-1}\|f\|_{L_2(\Pi^{(\e)})}.
\end{equation*}
\end{proof}

Let $\xi_2=\xi_2(x_1)$ be an infinitely differentiable cut-off function being one as $|x_1|>2$ and vanishing as $|x_1|<1$. We construct the function $\widetilde{u}_\e$ as
\begin{equation}\label{5.11}
\widetilde{u}_\e(x)=\widetilde{u}_1(x)\xi_3(x_1,\e)+\widetilde{u}_2(x), \quad \xi_3(x_1,\e):=\xi_2(x_1-L\e^{-1})+\xi_2(x_1+L\e^{-1}).
\end{equation}
Then the function $\widetilde{u}_2$ is given by the identity
\begin{equation}\label{5.22}
\widetilde{u}_2(x)=(\op{H}_{L\e^{-1}}-E-\e^2\l)^{-1}g,\quad g:=f(\e\cdot)-(-\D-\e^2\l)^{-1}\widetilde{u}_1\xi_3.
\end{equation}
Lemma~\ref{lm5.2} and the problem (\ref{5.2}) allow us to estimate $g$,
\begin{gather}
g=(1-\xi_3)f+\left(2\xi_3'\frac{\p}{\p x_3}+\xi_3''\right)\widetilde{u}_1,\label{5.12}
\\
\|g\|_{L_2(\Pi)}\leqslant C\left(\|\widetilde{u}_1\|_{\H^1(\Pi_{L\e^{-1}+2}\setminus\Pi_{L\e^{-1}-2})}
+\|f(\e\cdot)\|_{L_2(\Pi)}\right) \leqslant C\e^{-1} \|f\|_{L_2(\Pi^{(\e)})}.\label{5.13}
\end{gather}
We observe that due to the definition of $\xi_3$ and (\ref{5.12}) the function $g$ is supported in $\Pi_{L\e^{-1}+2}\setminus\Pi_{L\e^{-1}-2}$. We then consider the function $g$ separately for $x_1>0$ and $x_1<0$. Namely, we let
\begin{equation}\label{5.14}
g_+(x):=g(x_1-L\e^{-1}, x_2),\quad g_-(x):=g(L\e^{-1}-x_1,1-x_2),\quad |x_1|<2.
\end{equation}
Both these functions are extended by zero for $|x_1|>2$. It is clear that
\begin{equation*}
g(x)=g_+(x_1+L\e^{-1},x_2)+g_-(L\e^{-1}-x_1,1-x_2).
\end{equation*}
Let $\widetilde{u}_3^\pm$ be the solutions to the problem (\ref{3.5}), (\ref{3.6}) for $E=0$ and to the problem
(\ref{3.5}), (\ref{3.9}) for $E=\pi^2/4$ with $\mu=\e\sqrt{-\l}$ and $h=g_\pm$. By Lemmas~\ref{lm3.2},~\ref{lm3.3} and by (\ref{5.13}) we conclude that for 
each $b>0$ the estimate
\begin{equation}\label{5.17}
\|\widetilde{u}_3^\pm\|_{\H^1(\Pi_b)
}\leqslant C\e^{-1}\|f\|_{L_2(\Pi^{(\e)})}
\end{equation}
holds true. Moreover, subject to the value of $E$ these functions satisfy either the representation (\ref{3.7}) or (\ref{3.14}) with the coefficients satisfying (\ref{3.8}). The last estimate and (\ref{5.13}) imply
\begin{equation}\label{5.18}
|c_0^-(\e\sqrt{-\l})|^2+ \sum\limits_{m=1}^{\infty} m \big(|c_m^+(\e\sqrt{-\l})|^2+|c_m^-(\mu)|^2\big) \leqslant C
\e^{-2}\|f\|_{L_2(\Pi^{(\e)})}^2.
\end{equation}

By $\xi_4=\xi_4(t)$ we denote an infinitely differentiable cut-off function being one as $t>1$ and vanishing for $t<0$.
We introduce one more function
\begin{equation}\label{5.19}
\begin{aligned}
\widetilde{u}_3(x,\e):=&\xi_4\left(\frac{x_1+L\e^{-1}-3}{2L\e^{-1}-6}
\right)\widetilde{u}_3^+(x_1-L\e^{-1},x_2)
\\
&+ \xi_4\left(\frac{3-L\e^{-1}-x_1}{2L\e^{-1}-6}
\right)\widetilde{u}_3^+(L\e^{-1}-x_1,1-x_2).
\end{aligned}
\end{equation}
It follows from the definition of $\widetilde{u}_3^\pm$ and $\xi_4$ that the function $\widetilde{u}_3$ belongs to the domain of $\op{H}_{L\e^{-1}}$ and
\begin{gather}
(\op{H}_{L\e^{-1}}-E-\e^2\l)\widetilde{u}_3=g+\widehat{g},\label{5.20}
\\
\begin{aligned}
\widehat{g}(x):=&\frac{2}{2L\e^{-1}-6} \xi_4' \left(\frac{x_1+L\e^{-1}-3}{2L\e^{-1}-6}
\right)\frac{\p\widetilde{u}_3^+}{\p x_1}(x_1-L\e^{-1},x_2)
\\
&+
\frac{1}{(2L\e^{-1}-6)^2} \xi_4'' \left(\frac{x_1+L\e^{-1}-3}{2L\e^{-1}-6}
\right)\widetilde{u}_3^+(x_1-L\e^{-1},x_2)
\\
&+\frac{2}{2L\e^{-1}-6} \xi_4' \left(\frac{3-L\e^{-1}-x_1}{2L\e^{-1}-6}
\right)\frac{\p\widetilde{u}_3^+}{\p x_1}(L\e^{-1}-x_1,1-x_2)
\\
&+
\frac{1}{(2L\e^{-1}-6)^2} \xi_4'' \left(\frac{3-L\e^{-1}-x_1}{2L\e^{-1}-6}
\right)\widetilde{u}_3^+(L\e^{-1}-x_1,1-x_2).
\end{aligned}\nonumber
\end{gather}
The function $\widehat{g}$ is supported in $\Pi_{L\e^{-1}+2}\setminus\Pi_{L\e^{-1}-2}$. Bearing this fact and (\ref{5.17}), (\ref{5.18}) in mind together with the representations (\ref{3.7}) and (\ref{3.14}), we can estimate $\widehat{g}$ as
\begin{equation*}
\|\widehat{g}\|_{L_2(\Pi)}\leqslant C\e^{-1}\|f\|_{L_2(\Pi^{(\e)})}.
\end{equation*}
The last estimate and the assertions (\ref{5.22}), (\ref{5.20}) yield
\begin{align*}
&\widetilde{u}_3-\widetilde{u}_2=(\op{H}_{L\e^{-1}}-E-\e^2\l)^{-1}\widehat{g},
\\
&\|\widetilde{u}_3-\widetilde{u}_2\|_{L_2(\Pi)}\leqslant C\e^{-1}\|f\|_{L_2(\Pi^{(\e)})}.
\end{align*}
Since
\begin{equation*}
\|\nabla (\widetilde{u}_3-\widetilde{u}_2)\|_{L_2(\Pi)}^2 =(E-\e^2\l)\|(\widetilde{u}_3-\widetilde{u}_2)\|_{L_2(\Pi)}^2 + (\widehat{g}, \widetilde{u}_3-\widetilde{u}_2)_{L_2(\Pi)},
\end{equation*}
we get
\begin{equation}\label{5.23}
\|\widetilde{u}_3-\widetilde{u}_2\|_{\H^1(\Pi)}\leqslant C\e^{-1}\|f\|_{L_2(\Pi^{(\e)})}.
\end{equation}

Now we estimate $\widetilde{u}_3$. As $E=0$, it follows from Lemma~\ref{lm3.2} and (\ref{5.13}), (\ref{5.14}), (\ref{5.17}) that
\begin{align*}
& \|\widetilde{u}_\e^\pm\|_{\H^1(\Pi_0^+)} \leqslant C\|g_\pm\|_{L_2(\Pi)} \leqslant C\e^{-1}\|f\|_{L_2(\Pi^{(\e)})},
\\
& \|\widetilde{u}_\e^\pm\|_{\H^1(\Pi_0^+\cap\Pi_{L\e^{-1}+2})}  \leqslant C\big(\|g_\pm\|_{L_2(\Pi)} + \e^{-1/2} |c_0^-(\e\sqrt{-\l})|\big) \leqslant C\e^{-3/2}\|f\|_{L_2(\Pi^{(\e)})}.
\end{align*}
Hence, in view of the definition (\ref{5.19}) of $\widetilde{u}_3$ it satisfies the estimate
\begin{equation}\label{5.25}
\|\widetilde{u}_\e\|_{\H^1(\Pi)}\leqslant C\e^{-3/2}\|f\|_{L_2(\Pi^{(\e)})}.
\end{equation}
This estimate is also valid for $E=\pi^2/4$ that can be proven in the same way.

The inequalities (\ref{5.23}), (\ref{5.25}) imply
\begin{equation*}
\|\widetilde{u}_2\|_{\H^1(\Pi)}\leqslant C\e^{-3/2}\|f\|_{L_2(\Pi^{(\e)})}
\end{equation*}
and by (\ref{5.11}) it yields
\begin{equation*}
\|\widetilde{u}_\e-\widetilde{u}_1\xi_3\|_{\H^1(\Pi)} \leqslant C\e^{-3/2} \|f\|_{L_2(\Pi^{(\e)})}.
\end{equation*}
We apply Lemmas~\ref{lm5.1},~\ref{lm5.3},~\ref{lm5.2} and proceed in the same way as in (\ref{4.10}), (\ref{4.4}), (\ref{4.42}), (\ref{4.43}), (\ref{4.3}), (\ref{4.12}), (\ref{4.13}). It leads us to the estimates
\begin{equation*}
\|\widetilde{u}_\e\|_{\H^1(\Pi\setminus\Pi_{L\e^{-1}})} +  \|\widetilde{u}_\e-U_1\chi_{1,L}\|_{\H^1(\Pi_{L\e^{-1}})} \leqslant C\e^{-3/2} \|f\|_{L_2(\Pi^{(\e)})}
\end{equation*}
as $E=0$, and
\begin{equation*}
\|\widetilde{u}_\e-U_1\chi_{1,L}\|_{\H^1(\Pi\setminus\Pi_{L\e^{-1}})} + \|\widetilde{u}_\e\|_{\H^1(\Pi_{L\e^{-1}})} \leqslant C\e^{-3/2} \|f\|_{L_2(\Pi^{(\e)})}
\end{equation*}
as $E=\pi^2/4$. The desired inequalities follow directly from the obtained ones. The proof is complete.

\section*{Acknowledgments}

This work was initiated by stimulating discussions with David Krej\v{c}i\v{r}\'{\i}k. The authors thank him for this. They also thank the referee for valuable remarks.

The research was partially supported by the grant ``Spectral theory and asymptotic analysis'', FRA 2010 of Department of Engineering of the University of Sannio.

D.B. was partially supported by RFBR, the grant of the President of Russia for young scientists -- doctors of science and for leading scientific schools, by the Federal Task Program ``Scientific and pedagogical staff of innovative Russia for 2009-2013'' (contract no. 02.740.11.0612), and by the grant of FCT (ptdc/mat/101007/2008)

\end{document}